\newtheorem{thm}{Theorem}[section]
\newtheorem{lem}{Lemma}[section]
\newtheorem{prop}{Proposition}[section]
\theoremstyle{definition}
\newtheorem{defn}{Definition}[section]
\theoremstyle{remark}
\newtheorem{rem}{Remark}[section]
\numberwithin{equation}{section}
\def\ind{{\rm 1\hspace{-0.90ex}1}}
\begin{document}

\title{Explicit rates of convergence in the multivariate CLT for nonlinear statistics}
\author{Nguyen Tien Dung\footnote{Email: dung\_nguyentien10@yahoo.com}}

\date{\today}
\maketitle
\begin{abstract}
We investigate the multivariate central limit theorem for nonlinear statistics by means of Stein's method and Slepian's smart path interpolation method. Based on certain difference operators in theory of concentration inequalities, we obtain two explicit bounds for the rate of convergence. Applications to Rademacher functionals, the runs and quadratic forms are provided as well.
\end{abstract}
\noindent\emph{Keywords:} Multivariate normal approximation, Stein's method, Slepian's interpolation method, Difference operators.\\
{\em 2010 Mathematics Subject Classification:} 60F05, 62E17.

\section{Introduction}
Let $X=(X_1,X_2,...,X_n)$ be a vector of independent random variables (not necessarily identically distributed) taking values in some measurable space $\mathcal{X}.$ Fixed $d\geq 2,$ we consider the problem of obtaining explicit error bounds in the multivariate central limit theorem (CLT) for $\mathbb{R}^d$-valued random vector
\begin{equation}\label{11lqo}
F:=(F_1,F_2,...,F_d),
\end{equation}
where each $F_i:\mathcal{X}^n\to \mathbb{R}$ is a measurable function of $X,i.e.$ $F_i=F_i(X_1,X_2,...,X_n).$
The main task is to bound the distance
$$d_{\mathcal{H}}:=\sup\limits_{g\in \mathcal{H}}|E[g(F)]-E[g(Y)]|,$$
where $Y$ is a centered $d$-dimensional Gaussian vector and $\mathcal{H}$ is a suitable class of test functions. This problem, of course, is one of the most fundamental topics in Statistics and there is the number of works devoted to it. Among others, we refer the reader to Rinott \& Rotar \cite{Rinott1996} and  Chen \& Fang \cite{Chen2015} for the structures with local dependence, Bentkus \cite{Bentkus2003} and Chernozhukov et al. \cite{Chernozhukov2017} for studies of the dependence on dimension, Nourdin et al. \cite{Nourdin2010b} for homogeneous sums, D\"obler \& Peccati \cite{Dobler2017b} for $U$-statistics, etc. We also mention the general techniques such as the techniques of Malliavin calculus developed for the vectors of Gaussian, Poisson and Rademacher functionals \cite{Krokowski2016,Nourdin2010n,Peccati2010} and the technique of Stein couplings (exchangeable pairs, size bias couplings, etc) developed for arbitrary random vectors \cite{Chatterjee2008b,Fang2015,Goldstein1996,Reinert2009}.

It is surprising that only few works are devoted directly to the general random vectors (\ref{11lqo}). We only find in the literature two papers \cite{Bolthausen1993,Gotze1991} where Bolthausen and G\"otze  used Stein's method and linear statistics to establish Berry-Esseen bounds. Unfortunately, Theorem 2 in \cite{Bolthausen1993} is incorrect and a counterexample was given by Chen \& Shao, see Example 4.1 in \cite{Chen2007}. Regrading the technique and the results obtained in \cite{Gotze1991}, an exposition was given by Bhattacharya \& Holmes \cite{Bhattacharya2010}. In this paper, we do not aim to improve or generalize the results established previously by the other authors. Our purpose is to use a new technique for investigating the rate of convergence in the multivariate CLT for (\ref{11lqo}).

To measure the rate of convergence, we will provide the explicit upper estimates for the quantity
$$|E[g(F)]-E[g(Y)]|,$$
where the test function $g$ belongs to either $\mathcal{C}^2(\mathbb{R}^d)$ or $\mathcal{C}^3(\mathbb{R}^d).$ Those two classes of test functions were used in, e.g. \cite{Chatterjee2008b,Reinert2009,Peccati2010}. We recall that if $(F_n)_{n\geq 1}$ be a sequence of square integrable and centered random vectors and $ |E[g(F_n)]-E[g(Y)]|\to 0$ for any $g\in \mathcal{C}^k(\mathbb{R}^d)$ with bounded derivatives (for some $k\geq 1$), then $(F_n)_{n\geq 1}$ converges to $Y$ in distribution as $n$ tends to infinity. The steps in our proofs can be briefly described as follows.

\noindent{\it Step 1.} Using Stein's method and Slepian's interpolation method to reduce the problem to the study of covariances:
$$Cov(F_j,f_g(F)),\,\,1\leq j\leq d.$$
There is a common way to do this step, see e.g. \cite{Chernozhukov2017,Krokowski2016,Peccati2010}. In fact, the function $f_g:\mathbb{R}^d\to \mathbb{R}$ depends only on $g$ and is a twice differentiable function with bounded derivatives. 

\noindent{\it Step 2 (Main step).} Looking for the random variables $Z_{ij}$ satisfying
$$Cov(F_j,f_g(F))=E\big[\sum\limits_{i=1}^n\frac{\partial}{\partial x_i}f_g(F)Z_{ij}\big]+"remainder",\,\,1\leq j\leq d.$$
We note that if all $F_j's$ are Gaussian random variables, Stein's identity implies that $Z_{ij}=Cov(F_i,F_j)$ and $"remainder"$ vanishes.  Since $F_j's$ under our investigation are the measurable functions of independent random variables, we need a new technique to construct $Z_{ij}$ and to estimate $"remainder".$ 

\noindent{\it Step 3.} Combining the computations to get the explicit bounds for $|E[g(F)]-E[g(Y)]|.$

The rest of this article is organized as follows. Section \ref{vml4d9} contains the main ingradients in our work, we use the difference operators to construct a covariance formula and an approximate chain rule for the $\mathbb{R}$-valued functions of independent random variables. In Section \ref{mettt01}, we combine the results of Section \ref{vml4d9} with Stein's method and Slepian's interpolation method to obtain the explicit error bounds in multivariate CLT for the vectors (\ref{11lqo}). Some examples with detailed computations are given in Section \ref{mettt02}.

\section{Difference operators}\label{vml4d9}
Let $\mathcal{X}$ be a measurable space and $X=(X_1,X_2,...,X_n)$ be a vector of independent random variables, defined on some probability space $(\Omega,\mathfrak{F},P)$ and taking values in $\mathcal{X}.$ Let $X'=(X'_1,X'_2,...,X'_n)$ be an independent copy of $X.$ For each random variable $U=U(X),$ we write $T_iU=U(X_1,...,X_{i-1},X'_i,X_{i+1},...,X_n),i=1,...,n$ and denote by $E_i,E'_i$ the expectations with respect to $X_i$ and $X'_i,$ respectively. We introduce the $\sigma$-fields
\begin{align*}
&\mathcal{F}_0:=\{\emptyset,\Omega\}\,\,\,\text{and}\,\,\,\mathcal{F}_i:=\sigma(X_k,k\leq i),\,\,i=1,...,n
\end{align*}
and
\begin{align*}
&\mathcal{G}_{n+1}:=\{\emptyset,\Omega\}\,\,\,\text{and}\,\,\,\mathcal{G}_i:=\sigma(X_k,k\geq i),\,\,i=1,...,n.
\end{align*}
Following the notations introduced in \cite{Bobkov2017}, we recall the definition of two certain difference operators which will be used in our work.
\begin{defn}\label{kod2ol} Given a random variable $U\in L^1(P),$ we define the difference operators $\mathfrak{D}_i$ by
$$\mathfrak{D}_iU=U-E_i[U],\,\,i=1,...,n.$$
When $U\in L^2(P),$ we define the difference operators $\mathfrak{d}_i$ by
$$\mathfrak{d}_iU=\big(\frac{1}{2}E'_i|U-T_iU|^2\big)^{\frac{1}{2}},\,\,i=1,...,n.$$
\end{defn}
We note that, in theory of Boolean functions, $\mathfrak{D}$ is the so-called Laplacian operator, see e.g. Definition 2.25 in \cite{Donnell2014}.  The operators $\mathfrak{D}$ and $\mathfrak{d}$ both are very useful in the study of concentration inequalities.  In particular, the Efron-Stein inequality formulated in Theorem 3.1 of \cite{Boucheron2013} can be restated as follows.
\begin{prop} (Efron-Stein inequality) For any random variable $U\in L^2(P),$ we have
\begin{equation}\label{sopq1}
Var(U)\leq \sum\limits_{i=1}^n E|\mathfrak{D}_iU|^2=\sum\limits_{i=1}^n E|\mathfrak{d}_iU|^2.
\end{equation}
\end{prop}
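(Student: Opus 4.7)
The plan is to prove the inequality by the classical martingale (Doob) decomposition with respect to the filtration $(\mathcal{F}_i)$, and to prove the equality $\sum_i E|\mathfrak{D}_i U|^2 = \sum_i E|\mathfrak{d}_i U|^2$ term by term by a direct computation using the independence of $X$ and $X'$.

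For the inequality, I would first write $U - EU = \sum_{i=1}^n \Delta_i$, where $\Delta_i = E[U \mid \mathcal{F}_i] - E[U \mid \mathcal{F}_{i-1}]$ are the orthogonal martingale differences, so that $\operatorname{Var}(U) = \sum_i E[\Delta_i^2]$. The key observation is then that $\Delta_i$ can be rewritten in terms of $\mathfrak{D}_i U$: since $E_i U$ is a function of $(X_k)_{k \neq i}$ alone, its conditional expectation given $\mathcal{F}_i$ reduces to its conditional expectation given $\mathcal{F}_{i-1}$, which in turn equals $E[U \mid \mathcal{F}_{i-1}]$ by the independence of the coordinates. Therefore $\Delta_i = E[U - E_i U \mid \mathcal{F}_i] = E[\mathfrak{D}_i U \mid \mathcal{F}_i]$, and the conditional Jensen inequality gives $E[\Delta_i^2] \le E|\mathfrak{D}_i U|^2$. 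Summing over $i$ yields $\operatorname{Var}(U) \le \sum_i E|\mathfrak{D}_i U|^2$.

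For the identity $E|\mathfrak{D}_i U|^2 = E|\mathfrak{d}_i U|^2$, I would just expand both sides. On one hand, $E|\mathfrak{D}_i U|^2 = E[U^2] - 2E[U\,E_i U] + E[(E_i U)^2] = E[U^2] - E[(E_i U)^2]$, using $E[U\,E_i U] = E[E_i(U) \cdot E_i U] = E[(E_i U)^2]$. On the other hand, conditioning on the coordinates $(X_k)_{k \neq i}$ and using that $X_i$ and $X'_i$ are i.i.d.\ gives $E[U\,T_i U] = E[(E_i U)^2]$, hence $\tfrac12 E|U - T_i U|^2 = E[U^2] - E[(E_i U)^2]$. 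The two expressions coincide, so $E|\mathfrak{D}_i U|^2 = E|\mathfrak{d}_i U|^2$, and summing gives the claimed equality.

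Neither step is really an obstacle: the only mildly delicate point is keeping track of which variable each expectation integrates over (the $E_i$ vs.\ conditional expectations w.r.t.\ $\mathcal{F}_i$), which has to be done carefully to justify the identification $\Delta_i = E[\mathfrak{D}_i U \mid \mathcal{F}_i]$. Once that is clear, everything reduces to an application of conditional Jensen and a couple of lines of bookkeeping with independent copies.
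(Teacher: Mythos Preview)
Your proof is correct. The paper does not actually give its own proof of this proposition---it simply cites Theorem~3.1 in Boucheron--Lugosi--Massart---but the ingredients you use are precisely those the paper records in Proposition~\ref{mcsk4}: the identification $\Delta_i = E[U\mid\mathcal{F}_i]-E[U\mid\mathcal{F}_{i-1}] = E[\mathfrak{D}_iU\mid\mathcal{F}_i]$ is part~(i) there, and the termwise equality $E|\mathfrak{D}_iU|^2 = E|\mathfrak{d}_iU|^2$ follows at once from part~(iii) (take expectations in $(\mathfrak{d}_iU)^2 = \tfrac12[(\mathfrak{D}_iU)^2 + E_i(\mathfrak{D}_iU)^2]$) or from the computation in part~(iv). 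Your argument is the standard martingale proof and matches what the cited reference does.
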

Let us now recall some useful properties of the operators $\mathfrak{D}$ and $\mathfrak{d}$, see e.g. \cite{Bobkov2017,Dungnt2018}. For the sake of completeness we will give a brief proof of those properties.
\begin{prop}\label{mcsk4} For each $i= 1,...,n,$ under suitable integrability assumptions, we have



\noindent (i) $\mathfrak{D}_iE[U|\mathcal{F}_i]=E[\mathfrak{D}_iU|\mathcal{F}_i]$ and $\mathfrak{D}_iE[U|\mathcal{G}_i]=E[\mathfrak{D}_iU|\mathcal{G}_i],$

\noindent (ii) $E\left[(\mathfrak{D}_iU)V\right]=E\left[(\mathfrak{D}_iV)U\right]=E\left[(\mathfrak{D}_iU)(\mathfrak{D}_iV)\right],$

\noindent (ii) $(\mathfrak{d}_iU)^2=\frac{1}{2}[(\mathfrak{D}_iU)^2+E_i(\mathfrak{D}_iU)^2],$


\noindent (iv) $E|\mathfrak{D}_iU|^p\leq 2^p E|U|^p$ and $E[(\mathfrak{d}_iU)^{2p}]\leq E[(\mathfrak{D}_iU)^{2p}]$ $\,\,\forall\,p\geq 1.$ Particularly, $E|\mathfrak{D}_iU|^2\leq E|U|^2.$

\end{prop}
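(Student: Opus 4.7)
My plan is to prove each property by a direct computation, using only the fact that $E_i$ integrates out exclusively the $i$-th coordinate $X_i$, so by independence it commutes with conditioning on $(X_k)_{k\ne i}$ and behaves like a conditional expectation with respect to the $\sigma$-field generated by all other coordinates.

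For part (i), I would observe that $E[U\mid\mathcal{F}_i]$ is a measurable function of $(X_1,\dots,X_i)$, so by independence $E_i\bigl[E[U\mid\mathcal{F}_i]\bigr]=E[U\mid\mathcal{F}_{i-1}]$. On the other hand, $E_i[U]$ depends only on $(X_k)_{k\ne i}$, hence by a Fubini argument $E\bigl[E_i[U]\mid\mathcal{F}_i\bigr]=E\bigl[E_i[U]\mid\mathcal{F}_{i-1}\bigr]=E[U\mid\mathcal{F}_{i-1}]$. Subtracting gives the identity; the $\mathcal{G}_i$-version is symmetric. For part (ii), the key identity is $E\bigl[E_i[U]\,V\bigr]=E\bigl[E_i[U]\,E_i[V]\bigr]$, which follows from the same commutation property. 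Then expanding $\mathfrak{D}_iU=U-E_i[U]$ yields
\[
E[(\mathfrak{D}_iU)V]=E[UV]-E\bigl[E_i[U]E_i[V]\bigr]=E[(\mathfrak{D}_iV)U],
\]
and expanding $(\mathfrak{D}_iU)(\mathfrak{D}_iV)$ coordinate by coordinate reduces, after the same cancellation, to the same expression.

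For part (iii), I would expand $|U-T_iU|^2=U^2-2U\,T_iU+(T_iU)^2$ and apply $E'_i$, using $E'_i[T_iU]=E_i[U]$ and $E'_i[(T_iU)^2]=E_i[U^2]$ (since $X'_i$ is an independent copy of $X_i$). A parallel expansion of $(\mathfrak{D}_iU)^2$ and $E_i(\mathfrak{D}_iU)^2$ shows that both sides equal $U^2-2U\,E_i[U]+E_i[U^2]$. For the first half of part (iv), the elementary inequality $|a-b|^p\le 2^{p-1}(|a|^p+|b|^p)$ with $a=U,\ b=E_i[U]$, together with Jensen's inequality $|E_i[U]|^p\le E_i|U|^p$, yields $E|\mathfrak{D}_iU|^p\le 2^pE|U|^p$. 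The sharper $L^2$-bound $E|\mathfrak{D}_iU|^2\le E|U|^2$ comes from directly computing $E_i[(\mathfrak{D}_iU)^2]=E_i[U^2]-(E_i[U])^2\le E_i[U^2]$ and taking expectation. For the $\mathfrak{d}_i$-bound, I would invoke (iii): by convexity of $x\mapsto x^p$ on $[0,\infty)$,
\[
(\mathfrak{d}_iU)^{2p}\le\tfrac{1}{2}\bigl[(\mathfrak{D}_iU)^{2p}+(E_i(\mathfrak{D}_iU)^2)^p\bigr]\le\tfrac{1}{2}\bigl[(\mathfrak{D}_iU)^{2p}+E_i(\mathfrak{D}_iU)^{2p}\bigr],
\]
where the second step uses Jensen applied to the conditional expectation $E_i$. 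Taking total expectation collapses both terms to $E[(\mathfrak{D}_iU)^{2p}]$.

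There is no real obstacle here: the statements are algebraic identities or one-line consequences of Jensen and $c_p$-inequalities, and the only thing to be careful about is distinguishing the two roles played by $E_i$ (integration in one coordinate versus projection onto the $\sigma$-field of the other coordinates). The writeup will therefore consist mainly of verifying that the tacit integrability hypotheses justify each Fubini exchange, and collecting the four calculations in the order above.
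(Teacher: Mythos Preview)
Your proposal is correct and follows essentially the same approach as the paper. The only cosmetic difference is in part (iii): the paper decomposes $U-T_iU=(U-E_iU)+(E_iU-T_iU)$ before squaring, whereas you expand $(U-T_iU)^2$ directly; both routes reduce to the same expression $U^2-2U\,E_i[U]+E_i[U^2]$.
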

\begin{proof}
$(i)$ By the independence, we have $E_i[U]=E[U|\sigma(X_k,k\neq i)].$ Hence, we obtain
$$\mathfrak{D}_iE[U|\mathcal{F}_i]=E[U|\mathcal{F}_i]-E[E[U|\mathcal{F}_i]|\sigma(X_k,k\neq i)]=E[U|\mathcal{F}_i]-E[U|\mathcal{F}_{i-1}]$$
and
$$E[\mathfrak{D}_iU|\mathcal{F}_i]=E[U|\mathcal{F}_i]-E[E[U|\sigma(X_k,k\neq i)]|\mathcal{F}_i]=E[U|\mathcal{F}_i]-E[U|\mathcal{F}_{i-1}].$$
Similarly, we also obtain $\mathfrak{D}_iE[U|\mathcal{G}_i]=E[U|\mathcal{G}_{i}]-E[U|\mathcal{G}_{i+1}]=E[\mathfrak{D}_iU|\mathcal{G}_i].$

\noindent$(ii)$ This point follows from the relation
$$E[E_i[U]E_i[V]]=E[UE_i[V]]=E[E_i[U]V].$$
\noindent$(iii)$ Because $E_i[U]=E'_i[T_iU],$ we have
\begin{align*}
E'_i[(E_iU-T_iU)^2]&=E_i[(E_iU-U)^2]\\
&=E_i(\mathfrak{D}_iU)^2.
\end{align*}
This, together with the decomposition
$(U-T_iU)^2=(U-E_iU)^2+2(U-E_iU)(E_iU-T_iU)+(E_iU-T_iU)^2,$ gives us
$$2(\mathfrak{d}_iU)^2=(\mathfrak{D}_iU)^2+E'_i[(E_iU-T_iU)^2]=(\mathfrak{D}_iU)^2+E_i(\mathfrak{D}_iU)^2.$$
So we can finish the proof of the point $(iii).$

\noindent$(iv)$ By using the fundamental inequality $(a+b)^p\leq 2^{p-1}(a^p+b^p)$ we obtain
$$E|\mathfrak{D}_iU|^p\leq 2^{p-1} (E|U|^p+E|E_i[U]|^p)\leq 2^{p}E|U|^p,\,\,p\geq 1.$$
Similarly,
\begin{align*}E[(\mathfrak{d}_iU)^{2p}]&=\frac{1}{2^p}E[((\mathfrak{D}_iU)^2+E_i(\mathfrak{D}_iU)^2)^p]\\
&\leq \frac{1}{2}E[(\mathfrak{D}_iU)^{2p}+E_i(\mathfrak{D}_iU)^{2p}]\\
&\leq E[(\mathfrak{D}_iU)^{2p}],\,\,p\geq 1.
\end{align*}
When $p=2,$ we have $E|\mathfrak{D}_iU|^2=E[U^2]-2E[UE_i[U]]+E[(E_i[U])^2]=E[U^2]-E[(E_i[U])^2]\leq E[U^2].$

The proof of Proposition is complete.
\end{proof}
The next two propositions provide us the main ingradients to perform {\it Step 2} mentioned in Introduction.
\begin{prop}\label{lods3} (Covariance formula) Let $U=U(X)$ and $V=V(X)$ be two random variables in $L^2(P).$ For any $\alpha\in[0,1],$ we have
$$Cov(U,V)=E\left[\sum\limits_{i=1}^n \mathfrak{D}_i U \mathfrak{D}^{(\alpha)}_i V\right],$$
where $\mathfrak{D}^{(\alpha)}_i V:=\alpha E[\mathfrak{D}_iV|\mathcal{F}_i]+(1-\alpha)E[\mathfrak{D}_iV|\mathcal{G}_i].$
\end{prop}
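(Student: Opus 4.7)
My plan is to prove the covariance formula via forward and backward martingale decompositions, then obtain the $\alpha$-version by a convex combination. The starting identity is part (i) of Proposition \ref{mcsk4}, which (as established in its proof) gives
$$E[\mathfrak{D}_i U \mid \mathcal{F}_i] = E[U\mid \mathcal{F}_i] - E[U\mid \mathcal{F}_{i-1}], \qquad E[\mathfrak{D}_i U \mid \mathcal{G}_i] = E[U\mid \mathcal{G}_i] - E[U\mid \mathcal{G}_{i+1}].$$
Summing from $i=1$ to $n$ telescopes in both cases to $U-E[U]=\sum_{i=1}^n E[\mathfrak{D}_i U \mid \mathcal{F}_i]=\sum_{i=1}^n E[\mathfrak{D}_i U\mid \mathcal{G}_i]$, which is the basic decomposition driving the rest of the argument.

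Next I would establish the forward version
$$Cov(U,V)=\sum_{i=1}^n E\bigl[\mathfrak{D}_i U \cdot E[\mathfrak{D}_i V\mid \mathcal{F}_i]\bigr].$$
From the telescoping identity applied to $U$, $Cov(U,V)=E[(U-EU)V]=\sum_i E\bigl[E[\mathfrak{D}_i U\mid \mathcal{F}_i]\,V\bigr]$. Conditioning on $\mathcal{F}_i$ (on which the first factor is measurable) replaces $V$ by $E[V\mid\mathcal{F}_i]$. The martingale-difference property $E[\mathfrak{D}_i U\mid \mathcal{F}_{i-1}]=0$ (which holds since $E_i[U]$ is $\sigma(X_k,k\neq i)$-measurable and $\mathcal{F}_{i-1}\subset\sigma(X_k,k\neq i)$) then allows one to subtract $E[V\mid\mathcal{F}_{i-1}]$ inside the expectation for free, turning $E[V\mid\mathcal{F}_i]$ into $E[\mathfrak{D}_i V\mid\mathcal{F}_i]$. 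A last conditioning on $\mathcal{F}_i$, now against the $\mathcal{F}_i$-measurable factor $E[\mathfrak{D}_i V\mid\mathcal{F}_i]$, replaces $E[\mathfrak{D}_i U\mid\mathcal{F}_i]$ by $\mathfrak{D}_i U$, giving the displayed identity. Repeating the exact same steps with $\mathcal{G}_i$ in place of $\mathcal{F}_i$ (using the backward telescoping) yields
$$Cov(U,V)=\sum_{i=1}^n E\bigl[\mathfrak{D}_i U\cdot E[\mathfrak{D}_i V\mid \mathcal{G}_i]\bigr].$$

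Finally, forming the convex combination $\alpha\cdot(\text{forward})+(1-\alpha)\cdot(\text{backward})$ for $\alpha\in[0,1]$ pulls $\mathfrak{D}_i U$ outside and produces exactly $\mathfrak{D}^{(\alpha)}_i V$ inside the sum, yielding the claim. The argument is essentially bookkeeping with conditional expectations; the only delicate point is verifying the martingale-difference cancellation that eliminates the $E[V\mid\mathcal{F}_{i-1}]$ cross term, and this follows directly from the independence of the $X_j$ via the identification $E_i[U]=E[U\mid\sigma(X_k,k\neq i)]$ already used in Proposition \ref{mcsk4}(i).
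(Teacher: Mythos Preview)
Your proof is correct and follows essentially the same route as the paper: both use the forward/backward martingale telescoping $U-E[U]=\sum_i E[\mathfrak{D}_i U\mid\mathcal{F}_i]=\sum_i E[\mathfrak{D}_i U\mid\mathcal{G}_i]$ (equivalently for $V$) and then combine convexly. The only cosmetic difference is that the paper decomposes $V$ and invokes Proposition~\ref{mcsk4}(ii) (self-adjointness of $\mathfrak{D}_i$) to pass from $U$ to $\mathfrak{D}_i U$, whereas you decompose $U$ and unwind the conditional expectations by hand via the martingale-difference property; the two manipulations are equivalent.
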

\begin{proof}We have
\begin{align*}
V-E[V]&=\sum\limits_{i=1}^n(E[V|\mathcal{F}_i]-E[V|\mathcal{F}_{i-1}])\\
&=\sum\limits_{i=1}^n \mathfrak{D}_iE[V|\mathcal{F}_i]\,\,\text{by Proposition \ref{mcsk4}, $(i)$}.
\end{align*}
Similarly, $V-E[V]=\sum\limits_{i=1}^n(E[V|\mathcal{G}_{i}]-E[V|\mathcal{G}_{i+1}])=\sum\limits_{i=1}^n \mathfrak{D}_iE[V|\mathcal{G}_i].$ Hence, we can get
\begin{align*}
&Cov(U,V)=E[U(V-E[V])]\\
&=\alpha\sum\limits_{i=1}^n E\left[U\mathfrak{D}_iE[V|\mathcal{F}_i]\right]+(1-\alpha)\sum\limits_{i=1}^n E\left[U\mathfrak{D}_iE[V|\mathcal{G}_i]\right]\\
&=\alpha\sum\limits_{i=1}^n E\left[\mathfrak{D}_iU\mathfrak{D}_iE[V|\mathcal{F}_i]\right]+(1-\alpha)\sum\limits_{i=1}^n E\left[\mathfrak{D}_iU\mathfrak{D}_iE[V|\mathcal{G}_i]\right]\,\,\text{by Proposition \ref{mcsk4}, $(ii)$}\\
&=\alpha\sum\limits_{i=1}^n E\left[\mathfrak{D}_iUE[\mathfrak{D}_iV|\mathcal{F}_i]\right]+(1-\alpha)\sum\limits_{i=1}^n E\left[\mathfrak{D}_iUE[\mathfrak{D}_iV|\mathcal{G}_i]\right]\,\,\text{by Proposition \ref{mcsk4}, $(i)$}.
\end{align*}
This completes the proof.
\end{proof}
\begin{prop}\label{lods33}(Approximate chain rule) Consider a random vector $F=(F_1,...,F_d)\in L^2(P),$ where each component is a measurable function of $X.$ For any the function $f\in \mathcal{C}^2(\mathbb{R}^d)$ with bounded derivatives, we have
\begin{align*}
\mathfrak{D}_kf(F)&=\sum\limits_{i=1}^d \frac{\partial}{\partial x_i}f(F) \mathfrak{D}_kF_i+\sum\limits_{i,j=1}^d R^{(k,f)}_{ij},\,\,1\leq k\leq n,
\end{align*}
where the remainder terms $R^{(k,f)}_{ij},1\leq i,j\leq d$ satisfy the bound
$$|R^{(k,f)}_{ij}|\leq \frac{1}{2}\sup\limits_{x\in \mathbb{R}^d}\big|\frac{\partial^2}{\partial x_i\partial x_j}f(x)\big|[(\mathfrak{d}_kF_i)^2+(\mathfrak{d}_kF_j)^2].$$
\end{prop}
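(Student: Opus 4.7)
The plan is to rewrite $\mathfrak{D}_k f(F)$ in a form amenable to Taylor expansion around the point $F$, and then bound the second-order remainder using the $\mathfrak{d}_k$ operator.

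First I would observe that since $X'_k$ is an independent copy of $X_k$, integrating out $X_k$ is the same as replacing $X_k$ by $X'_k$ and integrating over $X'_k$; in symbols, $E_k[f(F)] = E'_k[f(T_k F)]$. Using also that $E'_k[f(F)] = f(F)$ (since $f(F)$ does not depend on $X'_k$), this gives the key identity
\begin{equation*}
\mathfrak{D}_k f(F) \;=\; f(F) - E_k[f(F)] \;=\; E'_k\bigl[f(F) - f(T_k F)\bigr].
\end{equation*}
The same identity applied componentwise yields $E'_k[F_i - T_k F_i] = F_i - E_k[F_i] = \mathfrak{D}_k F_i$, which is exactly what must appear as the coefficient of $\partial_i f(F)$.

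Next I would expand $f(T_k F)$ around $F$ by Taylor's theorem with integral (or Lagrange) remainder of order two: there exists a point $\xi$ on the segment from $F$ to $T_k F$ such that
\begin{equation*}
f(T_k F) - f(F) \;=\; \sum_{i=1}^d \tfrac{\partial}{\partial x_i} f(F)\,(T_k F_i - F_i) \;+\; \tfrac{1}{2}\sum_{i,j=1}^d \tfrac{\partial^2}{\partial x_i \partial x_j} f(\xi)\,(T_k F_i - F_i)(T_k F_j - F_j).
\end{equation*}
Substituting this into the identity above and applying $E'_k$ term by term leaves the clean linear part $\sum_{i=1}^d \partial_i f(F)\,\mathfrak{D}_k F_i$, with remainder
\begin{equation*}
R^{(k,f)}_{ij} \;=\; -\tfrac{1}{2}\,E'_k\!\left[\tfrac{\partial^2}{\partial x_i \partial x_j} f(\xi)\,(T_k F_i - F_i)(T_k F_j - F_j)\right].
\end{equation*}

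Finally, to bound $|R^{(k,f)}_{ij}|$, I would pull out the supremum $\sup_{x}|\partial_{ij}^2 f(x)|$ (which is finite by hypothesis), and then use the elementary inequality $|ab| \le \tfrac{1}{2}(a^2+b^2)$ inside $E'_k$. Since $(\mathfrak{d}_k F_i)^2 = \tfrac{1}{2}E'_k|F_i - T_k F_i|^2$ by definition, this produces exactly $(\mathfrak{d}_k F_i)^2 + (\mathfrak{d}_k F_j)^2$, giving the claimed bound. There is no real obstacle here; the only care needed is to track signs and to justify that the point $\xi$ (measurable in $(X,X')$) does not interfere with taking $E'_k$, which is why one works with the supremum norm of the second derivatives rather than with $\partial^2_{ij} f(\xi)$ directly.
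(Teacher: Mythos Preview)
Your proposal is correct and follows essentially the same route as the paper: the key identity $\mathfrak{D}_k f(F)=E'_k[f(F)-f(T_kF)]$, a second-order Taylor expansion around $F$, and the bound via $|ab|\le\tfrac12(a^2+b^2)$ together with the definition of $\mathfrak{d}_k$. The only cosmetic difference is that the paper states the Taylor remainder directly as a bound $|R^{(f)}_{ij}|\le\tfrac12\sup_x|\partial^2_{ij}f(x)|\,|(x_i-y_i)(x_j-y_j)|$ rather than via an intermediate Lagrange point $\xi$.
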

\begin{proof} By the multivariate Taylor expansion we have
$$f(x)-f(y)=\sum\limits_{i=1}^d \frac{\partial}{\partial x_i}f(x) (x_i-y_i)+\sum\limits_{i,j=1}^d R^{(f)}_{ij}$$
for all $x,y\in \mathbb{R}^d,$ where the remainder terms $R^{(f)}_{ij}$ are bounded by
$$|R^{(f)}_{ij}|\leq \frac{1}{2}\sup\limits_{x\in \mathbb{R}^d}\big|\frac{\partial^2}{\partial x_i\partial x_j}f(x)\big|\times |(x_i-y_i)(x_j-y_j)|.$$
On the other hand, for each $k=1,...,n,$ we have
\begin{align*}
\mathfrak{D}_kf(F)=f(F)-E_k[f(F)]=f(F)-E'_k[f(T_kF)]=E'_k[f(F)-f(T_kF)],
\end{align*}
where $T_kF=(T_kF_1,T_kF_2,...,T_kF_d).$ Hence, we can write
\begin{align*}
\mathfrak{D}_kf(F)&=E'_k\big[\sum\limits_{i=1}^d \frac{\partial}{\partial x_i}f(F) (F_i-T_kF_i)\big]+\sum\limits_{i,j=1}^d R^{(k,f)}_{ij}\\
&=\sum\limits_{i=1}^d \frac{\partial}{\partial x_i}f(F) \mathfrak{D}_kF_i+\sum\limits_{i,j=1}^d R^{(k,f)}_{ij},\,\,k=1,...,n,
\end{align*}
where the remainder terms $R^{(k,f)}_{ij}$ are bounded by
\begin{align*}
|R^{(k,f)}_{ij}|&\leq \frac{1}{2}\sup\limits_{x\in \mathbb{R}^d}\big|\frac{\partial^2}{\partial x_i\partial x_j}f(x)\big|E'_k|(F_i-T_kF_i)(F_j-T_kF_j)|\\
&\leq\frac{1}{4}\sup\limits_{x\in \mathbb{R}^d}\big|\frac{\partial^2}{\partial x_i\partial x_j}f(x)\big|(E'_k|F_i-T_kF_i|^2+E'_k|F_j-T_kF_j|^2)\\
&=\frac{1}{2}\sup\limits_{x\in \mathbb{R}^d}\big|\frac{\partial^2}{\partial x_i\partial x_j}f(x)\big|[(\mathfrak{d}_kF_i)^2+(\mathfrak{d}_kF_j)^2].
\end{align*}
The proof is complete.
\end{proof}

\section{Explicit rates of convergence}\label{mettt01} 
In this Section, we employ Stein's method and Slepian's interpolation method to obtain two explicit bounds for rates of convergence. To begin, we recall some basic notations.

\noindent $\bullet$ On the space of real $d\times d$ matrices, the Hilbert-Schmidt inner product and the Hilbert - Schmidt norm are defined respectively by $\langle A,B\rangle_{H.S.}:=\mathrm{Tr}(AB^T)$ and $\|A\|_{H.S.}:=\sqrt{\mathrm{Tr}(AA^T)}.$ The operator norm of a matrix $A$ is defined by $\|A\|_{op}:=\sup\limits_{\|x\|_{\mathbb{R}^d}=1}\|Ax\|_{\mathbb{R}^d},$ where $\|.\|_{\mathbb{R}^d}$ is the Euclidian norm on $\mathbb{R}^d.$ Note that if $A=diag(\lambda_1,...,\lambda_d)$ is a diagonal matrix, then $\|A\|_{op}=\max\limits_{1\leq i\leq d}|\lambda_i|.$

\noindent $\bullet$ $\mathcal{C}^k(\mathbb{R}^d)$ denotes the space of $k$-times continuously differentiable real-valued functions on $\mathbb{R}^d.$

\noindent $\bullet$ For every function $g:\mathbb{R}^d\to \mathbb{R},$ let
$$\|g\|_{Lip}:=\sup\limits_{x\neq y}\frac{|g(x)-g(y)|}{\|x-y\|_{\mathbb{R}^d}}.$$
We also let
$$\text{$M_2(g):=\sup\limits_{x\neq y}\frac{\|\nabla g(x)-\nabla g(y)\|_{\mathbb{R}^d}}{\|x-y\|_{\mathbb{R}^d}}$ if $g\in \mathcal{C}^1(\mathbb{R}^d)$}$$
and
$$\text{$M_3(g):=\sup\limits_{x\neq y}\frac{\|\mathrm{Hess}\,g(x)-\mathrm{Hess}\,g(y)\|_{op}}{\|x-y\|_{\mathbb{R}^d}}$ if $g\in \mathcal{C}^2(\mathbb{R}^d)$}.$$
Note that if $g\in \mathcal{C}^1(\mathbb{R}^d),$ then $\|g\|_{Lip}=\sup\limits_{x\in \mathbb{R}^d}\|\nabla g(x)\|_{\mathbb{R}^d}.$ If $g\in \mathcal{C}^2(\mathbb{R}^d),$ then $M_2(g)=\sup\limits_{x\in \mathbb{R}^d}\|\mathrm{Hess}\,g(x)\|_{op}.$

\noindent $\bullet$ For a positive integer $k$ and a function $g\in \mathcal{C}^k(\mathbb{R}^d),$ we put
$$\|g^{(k)}\|_\infty:=\max\limits_{1\leq i_1\leq...\leq i_k\leq d}\sup\limits_{x\in \mathbb{R}^d}\big|\frac{\partial^k}{\partial x_{i_1}...\partial x_{i_k}}g(x)\big|.$$
As usual, we write $g^{(2)}=g''$ and $g^{(3)}=g'''.$

\subsection{Stein's method}
Powerful as it is, Stein's method has been extensively used to study the rate of convergence in CLTs. In multivariate setting, some elements of this method can be summarized as in the next lemma (see, e.g. Lemma 2.17 in \cite{Peccati2010}).
\begin{lem} Fix an integer $d\geq  2$ and let $C = \{C_{ij} : i, j =1, . . . , d\}$ be a $d \times d$ nonnegative definite symmetric real matrix.

\noindent 1. Let $Y$ be a random variable with values in $\mathbb{R}^d.$ Then $Y \sim N_d (0, C)$ if and only if, for every twice differentiable function $f:\mathbb{R}^d\to \mathbb{R}$ such that $E|\langle C,\mathrm{Hess} f(Y)\rangle_{H.S.}|+E|\langle Y,\nabla f(Y)\rangle_{\mathbb{R}^d}|<\infty,$ it holds that
$$E[\langle Y,\nabla f(Y)\rangle_{\mathbb{R}^d}-\langle C,\mathrm{Hess} f(Y)\rangle_{H.S.}]=0.$$

\noindent 2. Assume in addition that $C$ is positive definite and consider a Gaussian random vector $Y \sim N_d (0, C).$ Let $g:\mathbb{R}^d\to \mathbb{R}$ belong to $\mathcal{C}^2(\mathbb{R}^d)$ with first and second bounded derivatives. Then, the function $U_0g$ defined by
\begin{equation}\label{mull01}
U_0g(x)=\int_0^1 \frac{1}{2t}E[g(\sqrt{t}x+\sqrt{1-t}Y)-g(Y)]dt
\end{equation}
is a solution to the following partial differential equation (with unknown function $f $):
\begin{equation}\label{mull02}
g(x)-E[g(Y)]=\langle x,\nabla f(x)\rangle_{\mathbb{R}^d}-\langle C,\mathrm{Hess} f(x)\rangle_{H.S.},\,\,\,x\in \mathbb{R}^d.
\end{equation}
Moreover, one has that
\begin{equation}\label{uudk1}
\sup\limits_{x\in \mathbb{R}^d}\|\mathrm{Hess}\, U_0g(x)\|_{H.S.}\leq \|C^{-1}\|_{op}\|C\|^{1/2}_{op}\|g\|_{Lip}
\end{equation}
and
\begin{equation}\label{uudk2}
M_3(U_0g)\leq \frac{\sqrt{2\pi}}{4} \|C^{-1}\|^{3/2}_{op}\|C\|_{op}M_2(g)
\end{equation}
\end{lem}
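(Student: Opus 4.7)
I would treat the two parts separately, with the Gaussian integration-by-parts (IBP) identity
\begin{equation*}
E[Y_i \phi(Y)] = \sum_{k=1}^d C_{ik}\, E[\partial_k \phi(Y)], \qquad Y\sim N_d(0,C),
\end{equation*}
(valid for sufficiently smooth $\phi$; $C$ is taken positive definite in part 2) playing the central role throughout.

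For part 1, the forward direction $Y\sim N_d(0,C)\Rightarrow$ Stein identity is immediate: apply Gaussian IBP to each $E[Y_i \partial_i f(Y)]$ and sum in $i$. For the converse, I would test the given identity against the real and imaginary parts of the Fourier kernels $f_\xi(y)=e^{i\langle\xi,y\rangle_{\mathbb{R}^d}}$ (justified by a truncation and density argument to meet the integrability hypothesis) to obtain the first order PDE $\langle\xi,\nabla_\xi\phi(\xi)\rangle_{\mathbb{R}^d} = -\langle C\xi,\xi\rangle_{\mathbb{R}^d}\,\phi(\xi)$ for the characteristic function $\phi(\xi)=E[e^{i\langle\xi,Y\rangle_{\mathbb{R}^d}}]$, whose unique solution with $\phi(0)=1$ is $\exp(-\tfrac12\langle C\xi,\xi\rangle_{\mathbb{R}^d})$, identifying the law of $Y$.

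For part 2, I would begin by checking that $U_0g$ solves the PDE (\ref{mull02}). Writing $W_t := \sqrt{t}\,x + \sqrt{1-t}\,Y$, a direct $t$-differentiation gives
\begin{equation*}
\frac{d}{dt}E[g(W_t)] = \frac{1}{2\sqrt{t}}\langle x, E[\nabla g(W_t)]\rangle_{\mathbb{R}^d} - \frac{1}{2\sqrt{1-t}}E[\langle Y,\nabla g(W_t)\rangle_{\mathbb{R}^d}].
\end{equation*}
Applying Gaussian IBP to the second expectation (with scaling $\sqrt{1-t}$ in $Y$) yields $E[\langle Y,\nabla g(W_t)\rangle_{\mathbb{R}^d}] = \sqrt{1-t}\,\langle C,E[\mathrm{Hess}\,g(W_t)]\rangle_{H.S.}$. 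Integrating in $t$ over $[0,1]$, passing $\nabla_x$ and $\mathrm{Hess}_x$ under the integral, and recognising the resulting integrals as $\langle x,\nabla U_0g(x)\rangle_{\mathbb{R}^d}$ and $\langle C,\mathrm{Hess}\,U_0g(x)\rangle_{H.S.}$ then produces (\ref{mull02}).

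The main obstacle is proving the regularity estimates (\ref{uudk1}) and (\ref{uudk2}). Differentiating $U_0g$ twice in $x$ only gives $\mathrm{Hess}\,U_0g(x)=\tfrac12\int_0^1 E[\mathrm{Hess}\,g(W_t)]\,dt$, which controls the Hessian by $M_2(g)$, whereas (\ref{uudk1}) demands a bound by $\|g\|_{Lip}$. The key idea is to trade one of the two derivatives on $g$ for a factor of $C^{-1}Y$ via a further Gaussian IBP in the variable $Y$: using $(\partial_i\partial_j g)(W_t) = \frac{1}{\sqrt{1-t}}\partial_{y_i}[(\partial_j g)(W_t)]$ together with the IBP identity one obtains
\begin{equation*}
\partial_i\partial_j U_0g(x) = \frac{1}{2}\int_0^1 \frac{1}{\sqrt{1-t}}\, E\bigl[(\partial_j g)(W_t)\,(C^{-1}Y)_i\bigr]\,dt.
\end{equation*}
Cauchy-Schwarz, the bound $\|\nabla g(W_t)\|_{\mathbb{R}^d}\leq\|g\|_{Lip}$, and standard moment estimates for $\|C^{-1}Y\|_{\mathbb{R}^d}$ in terms of $\|C^{-1}\|_{op}$ and $\|C\|_{op}^{1/2}$ then yield (\ref{uudk1}). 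For (\ref{uudk2}) I would apply the same IBP trick to one out of three derivatives of $g$; the resulting time-integral $\int_0^1 \sqrt{t}/(2\sqrt{1-t})\,dt = \pi/4$ combined with the standard Gaussian absolute moment $E|Z|=\sqrt{2/\pi}$ is the source of the constant $\sqrt{2\pi}/4=(\pi/4)\sqrt{2/\pi}$, while the remaining operator-norm factors come again from bounding $\|C^{-1}Y\|_{\mathbb{R}^d}$.
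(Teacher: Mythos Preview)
The paper does not give its own proof of this lemma: it is quoted as a known result, with the parenthetical reference ``see, e.g.\ Lemma~2.17 in \cite{Peccati2010}''. So there is no argument in the paper to compare your proposal against. Your outline is the standard route to these facts (Gaussian integration by parts for the Stein identity and its converse via characteristic functions; differentiation under the integral plus Gaussian IBP for the PDE; and the ``trade a derivative for a factor of $C^{-1}Y$'' trick for the regularity bounds), and it is essentially the approach taken in the references the paper cites.

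One point worth tightening in your sketch: after the IBP step you reach
\[
\mathrm{Hess}\,U_0g(x)=\tfrac12\int_0^1\frac{1}{\sqrt{1-t}}\,E\bigl[(C^{-1}Y)\,(\nabla g(W_t))^T\bigr]\,dt,
\]
and a crude Cauchy--Schwarz bound on $E[\|C^{-1}Y\|_{\mathbb{R}^d}]$ produces a factor $\sqrt{\mathrm{Tr}(C^{-1})}$ rather than the claimed $\|C^{-1}\|_{op}\|C\|_{op}^{1/2}$. To recover the exact constants in (\ref{uudk1})--(\ref{uudk2}) one has to be slightly more careful with the matrix estimates (working, for instance, with $Y=C^{1/2}Z$ and bounding the rank-one expectations componentwise, or arguing via the operator norm of $C^{-1/2}$ applied to a standard Gaussian vector), as is done in the cited sources. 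Since this lemma is only stated and not proved in the paper, there is nothing further to compare.
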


The next statement is the first main result of the present paper.
\begin{thm}\label{k1opa4} Let $C=(C_{ij})_{d\times d}$ be a positive definite matrix and $Y$ be a centered $d$-dimensional Gaussian vector with covariance $C.$ Suppose that $F=(F_1,...,F_d)$ is a $\mathbb{R}^d$-valued random vector such that $E[F_i]=0$ and $\sigma_{ij}:=E[F_iF_j]<\infty$ for all $0\leq i,j\leq d.$ Then, for any $\alpha\in[0,1]$ and $g\in \mathcal{C}^2(\mathbb{R}^d)$ with $\|g\|_{Lip}+M_2(g)<\infty,$ we have
\begin{align}
|E[g(F)]&-E[g(Y)]|\leq B_1\big(\sum\limits_{i,j=1}^dE|C_{ij}-Z^{(\alpha)}_{ij}|^2\big)^{1/2}+B_2\sum\limits_{i=1}^d\sum\limits_{k=1}^nE|\mathfrak{D}_k F_i|^3,\label{jjflme}\\
&\leq B_1\sum\limits_{i,j=1}^d|C_{ij}-\sigma_{ij}|+B_1\sum\limits_{i,j=1}^d \sqrt{Var(Z^{(\alpha)}_{ij})}+B_2\sum\limits_{i=1}^d \sum\limits_{k=1}^nE|\mathfrak{D}_k F_i|^3,\label{9hlw3g}
\end{align}
where $B_1:=\|C^{-1}\|_{op}\|C\|^{1/2}_{op} \|g\|_{Lip},B_2:=\frac{\sqrt{2\pi}}{4} \|C^{-1}\|^{3/2}_{op}\|C\|_{op}M_2(g)d^2$ and $$Z^{(\alpha)}_{ij}:=\sum\limits_{k=1}^n\mathfrak{D}_k F_i \mathfrak{D}^{(\alpha)}_k F_j,\,1\leq i,j\leq d.$$
\end{thm}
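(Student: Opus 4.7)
The starting point is Stein's equation. Let $f := U_0 g$ as in (\ref{mull01}); by the preceding lemma, $f$ solves the PDE (\ref{mull02}), so evaluating both sides at $F$, taking expectations, and using $E[F_j]=0$ rewrites the target quantity as
\begin{equation*}
E[g(F)] - E[g(Y)] = \sum_{j=1}^d \mathrm{Cov}(F_j, \partial_j f(F)) - \sum_{i,j=1}^d C_{ij}\, E[\partial_{ij} f(F)].
\end{equation*}
The strategy is to transform each $\mathrm{Cov}(F_j,\partial_j f(F))$ so that second derivatives of $f$ emerge and combine with the $C_{ij} E[\partial_{ij} f(F)]$ term into a Hilbert--Schmidt-type discrepancy between $Z^{(\alpha)}_{ij}$ and $C_{ij}$, up to a remainder governed by third moments.

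The transformation proceeds in two moves. First, I apply Proposition \ref{lods3} with the roles $U=\partial_j f(F)$ and $V=F_j$, giving $\mathrm{Cov}(F_j,\partial_j f(F))=\sum_k E[\mathfrak{D}_k \partial_j f(F)\cdot \mathfrak{D}^{(\alpha)}_k F_j]$. Second, I invoke the approximate chain rule (Proposition \ref{lods33}) on $\partial_j f$ to expand $\mathfrak{D}_k \partial_j f(F) = \sum_i \partial_{ij} f(F)\, \mathfrak{D}_k F_i + \sum_{i,l} R^{(k,\partial_j f)}_{il}$. Substituting and summing over $j$ yields the decomposition
\begin{equation*}
E[g(F)] - E[g(Y)] = \sum_{i,j=1}^d E\bigl[\partial_{ij} f(F)\,(Z^{(\alpha)}_{ij}-C_{ij})\bigr] + \mathcal{R},
\end{equation*}
where $\mathcal{R}=\sum_{j,k}\sum_{i,l} E\bigl[\mathfrak{D}^{(\alpha)}_k F_j\cdot R^{(k,\partial_j f)}_{il}\bigr]$.

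For the main term I would apply Cauchy--Schwarz for the Hilbert--Schmidt inner product to bound it by $\sup_x \|\mathrm{Hess}\, f(x)\|_{H.S.}\cdot E\|Z^{(\alpha)}-C\|_{H.S.}$, followed by Jensen to replace $E\|Z^{(\alpha)}-C\|_{H.S.}$ by $(\sum_{i,j} E|Z^{(\alpha)}_{ij}-C_{ij}|^2)^{1/2}$. The estimate (\ref{uudk1}) then produces precisely the constant $B_1$ and gives the first term of (\ref{jjflme}). To pass to (\ref{9hlw3g}), I would split $E|Z^{(\alpha)}_{ij}-C_{ij}|^2 = \mathrm{Var}(Z^{(\alpha)}_{ij}) + (E[Z^{(\alpha)}_{ij}]-C_{ij})^2$, identify $E[Z^{(\alpha)}_{ij}]=\mathrm{Cov}(F_i,F_j)=\sigma_{ij}$ via Proposition \ref{lods3} applied to the pair $(F_i,F_j)$, and finally use $\sqrt{\sum a_{ij}^2}\le \sum |a_{ij}|$.

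The main obstacle is the control of $\mathcal{R}$. From Proposition \ref{lods33}, $|R^{(k,\partial_j f)}_{il}|\le \tfrac{1}{2}\sup_x|\partial_{ijl}f(x)|\,[(\mathfrak{d}_k F_i)^2+(\mathfrak{d}_k F_l)^2]$, and the pointwise bound $\sup_x|\partial_{ijl}f(x)|\le M_3(f)$ follows by applying the definition of $M_3$ to directional derivatives of the Hessian. I would then apply Young's inequality $ab^2\le \tfrac{1}{3}a^3+\tfrac{2}{3}b^3$ to each $E[|\mathfrak{D}^{(\alpha)}_k F_j|(\mathfrak{d}_k F_i)^2]$, and reduce both third moments to $E|\mathfrak{D}_k F_i|^3$: for $|\mathfrak{D}^{(\alpha)}|^3$ this uses conditional Jensen inside the definition of $\mathfrak{D}^{(\alpha)}_k$, while for $(\mathfrak{d}_k U)^3$ I would combine Proposition \ref{mcsk4}(iii) with Jensen for $x\mapsto x^{3/2}$ to get the pointwise estimate $(\mathfrak{d}_k U)^3\le \tfrac{1}{2}(|\mathfrak{D}_k U|^3+E_k|\mathfrak{D}_k U|^3)$. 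Counting the $d^2$ pairs $(i,l)$ coming from the double sum in the chain rule, a further sum over $j$, and applying (\ref{uudk2}) to bound $M_3(f)$ yield the constant $B_2=\tfrac{\sqrt{2\pi}}{4}\|C^{-1}\|_{op}^{3/2}\|C\|_{op}M_2(g)d^2$ in front of $\sum_i\sum_k E|\mathfrak{D}_k F_i|^3$, thereby closing both (\ref{jjflme}) and (\ref{9hlw3g}).
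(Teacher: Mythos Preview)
Your proposal is correct and follows essentially the same route as the paper's proof: Stein's equation via $U_0g$, then Proposition~\ref{lods3} followed by Proposition~\ref{lods33} to produce the decomposition into the Hilbert--Schmidt discrepancy term plus the remainder $\mathcal{R}$, and finally the passage from (\ref{jjflme}) to (\ref{9hlw3g}) via $E[Z^{(\alpha)}_{ij}]=\sigma_{ij}$. The only notable difference is in how you bound $\mathcal{R}$: the paper applies H\"older with exponents $3/2$ and $3$ to the product $\big(\sum_i(\mathfrak{d}_kF_i)^2\big)\big(\sum_j|\mathfrak{D}^{(\alpha)}_kF_j|\big)$ and then power-mean inequalities, whereas you use Young's inequality $ab^2\le\tfrac13 a^3+\tfrac23 b^3$ term-by-term; both routes land on the same constant $M_3(U_0g)\,d^2$. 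One small point you omit is the reduction to $g\in\mathcal{C}^\infty$ by approximation (needed so that $\partial_j U_0g$ is genuinely $\mathcal{C}^2$ before invoking Proposition~\ref{lods33}), which the paper handles by citing the argument of Chatterjee--Meckes; this is a routine technicality rather than a gap in the strategy.
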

\begin{proof}{\it Step 1.} By using an approximate argument as in the proof of Theorem 2.3 in \cite{Chatterjee2008b}, we can and will assume that $g\in \mathcal{C}^\infty(\mathbb{R}^d).$ Because the function $U_0g(x)$ defined by (\ref{mull01}) is a solution to the equation (\ref{mull02}) we obtain
\begin{align}
|E[g(F)]-E[g(Y)]|&=|E[\langle C,\mathrm{Hess}\, U_0g(F)\rangle_{H.S.}-\langle F,\nabla U_0g(F)\rangle_{\mathbb{R}^d}]|\notag\\
&=\big|E\big[\sum\limits_{i,j=1}^dC_{ij}\frac{\partial^2}{\partial x_i\partial x_j}U_0g(F)-\sum\limits_{j=1}^dF_j\frac{\partial}{\partial x_j}U_0g(F)\big]\big|\notag\\
&=\big|E\big[\sum\limits_{i,j=1}^dC_{ij}\frac{\partial^2}{\partial x_i\partial x_j}U_0g(F)-\sum\limits_{j=1}^dF_jf_j(F)\big]\big|,\label{iiflq}
\end{align}
where $f_j(x):=\frac{\partial}{\partial x_j}U_0g(x),\,\,j=1,...,d,\,\,x\in \mathbb{R}^d.$ Note that
$$\frac{\partial}{\partial x_i}f_j(x)=\frac{\partial^2}{\partial x_i \partial x_j}U_0g(x),\,\,\,\frac{\partial^2}{\partial x_i \partial x_l}f_j(x)=\frac{\partial^3}{\partial x_i \partial x_l \partial x_j}U_0g(x).$$
{\it Step 2.} For each $j=1,...,d,$ it follows from Proposition \ref{lods33} that
$$\mathfrak{D}_kf_j(F)=\sum\limits_{i=1}^d \frac{\partial^2}{\partial x_i \partial x_j}U_0g(F) \mathfrak{D}_kF_i+\sum\limits_{i,l=1}^d R^{(k,f_j)}_{il},\,\,1\leq k\leq n,$$
where the remainder terms $R^{(k,f_j)}_{il}$ are bounded by
\begin{equation}\label{kdlw1}
|R^{(k,f_j)}_{il}|\leq \frac{ \|f_j''\|_\infty}{2}[(\mathfrak{d}_kF_i)^2+(\mathfrak{d}_kF_l)^2]\leq \frac{ \|(U_0g)'''\|_\infty}{2}[(\mathfrak{d}_kF_i)^2+(\mathfrak{d}_kF_l)^2].
\end{equation}
We have $E\big[F_jf_j(F)\big]=Cov(F_j,f_j(F))$ because $E[F_j]=0.$ Thanks to Proposition \ref{lods3} we obtain
\begin{align}
E\big[F_jf_j(F)\big]&=E\big[\sum\limits_{k=1}^n \mathfrak{D}_kf_j(F) \mathfrak{D}^{(\alpha)}_kF_j\big]\notag\\
&=E\big[\sum\limits_{k=1}^n \big(\sum\limits_{i=1}^d \frac{\partial^2}{\partial x_i \partial x_j}U_0g(F) \mathfrak{D}_kF_i+\sum\limits_{i,l=1}^d R^{(k,f_j)}_{il}\big)\mathfrak{D}^{(\alpha)}_kF_j\big]\notag\\
&=E\big[\sum\limits_{i=1}^d \frac{\partial^2}{\partial x_i \partial x_j}U_0g(F) \sum\limits_{k=1}^n \mathfrak{D}_kF_i \mathfrak{D}^{(\alpha)}_kF_j+\sum\limits_{k=1}^n \big(\sum\limits_{i,l=1}^d R^{(k,f_j)}_{il}\big)\mathfrak{D}^{(\alpha)}_kF_j\big]\notag\\
&=E\big[\sum\limits_{i=1}^d \frac{\partial^2}{\partial x_i \partial x_j}U_0g(F) Z^{(\alpha)}_{ij}+\sum\limits_{k=1}^n \big(\sum\limits_{i,l=1}^d R^{(k,f_j)}_{il}\big)\mathfrak{D}^{(\alpha)}_kF_j\big],\,\,1\leq j\leq d.\notag
\end{align}
As a consequence, we can write
\begin{align}
E\big[\sum\limits_{j=1}^dF_jf_j(F)\big]=E\big[\sum\limits_{i,j=1}^d\frac{\partial^2}{\partial x_i\partial x_j}U_0g(F)Z^{(\alpha)}_{ij}\big]+R,\label{iiflq1}
\end{align}
where $R:=E\big[\sum\limits_{j=1}^d\sum\limits_{k=1}^n \big(\sum\limits_{i,l=1}^d R^{(k,f_j)}_{il}\big)\mathfrak{D}^{(\alpha)}_kF_j\big].$ From the estimates (\ref{kdlw1}) we deduce
\begin{align*}
|R|&\leq \frac{\|(U_0g)'''\|_\infty}{2}E\big[\sum\limits_{j=1}^d\sum\limits_{k=1}^n \big(\sum\limits_{i,l=1}^d[(\mathfrak{d}_kF_i)^2+(\mathfrak{d}_kF_l)^2]\big)|\mathfrak{D}^{(\alpha)}_kF_j|\big]\\
&=\|(U_0g)'''\|_\infty d\sum\limits_{k=1}^n E\big[\big(\sum\limits_{i=1}^d(\mathfrak{d}_kF_i)^2\big)\big(\sum\limits_{j=1}^d|\mathfrak{D}^{(\alpha)}_kF_j|\big)\big].
\end{align*}
By the elementary inequality $(|a_1|+...+|a_d|)^m\leq d^{m-1}(|a_1|^p+...+|a_d|^m)$ for all $a_1,...,a_d\in \mathbb{R}$ and $m\geq 1,$ we obtain
$$E\big(\sum\limits_{i=1}^d(\mathfrak{d}_kF_i)^2\big)^{3/2}\leq \sqrt{d}E\big[\sum\limits_{i=1}^d(\mathfrak{d}_kF_i)^3\big]\leq \sqrt{d}\sum\limits_{i=1}^d E|\mathfrak{D}_kF_i|^3,$$
$$E\big(\sum\limits_{j=1}^d |\mathfrak{D}^{(\alpha)}_kF_j|\big)^3\leq d^2E\big[\sum\limits_{j=1}^d|\mathfrak{D}^{(\alpha)}_kF_j|^3\big]\leq d^2\sum\limits_{j=1}^d E|\mathfrak{D}_kF_j|^3.$$
Note that, in the last inequality, we used the facts that $|\mathfrak{D}^{(\alpha)}_k F_j|^3\leq (\alpha |E[\mathfrak{D}_kF_j|\mathcal{F}_k]|+(1-\alpha)|E[\mathfrak{D}_kF_j|\mathcal{G}_k]|)^3$ and that $E[|E[\mathfrak{D}_kF_j|\mathcal{F}_k]|^p|E[\mathfrak{D}_kF_j|\mathcal{F}_k]|^{3-p}]\leq E|\mathfrak{D}_kF_j|^3$ for all $0\leq p\leq 3.$

We now use H\"older inequality and the relation $\|(U_0g)'''\|_\infty\leq M_3(U_0g)$ to get
\begin{align}
|R|&\leq \|(U_0g)'''\|_\infty d\sum\limits_{k=1}^n\big(\sqrt{d}\sum\limits_{i=1}^d E|\mathfrak{D}_kF_i|^3\big)^{2/3}\big(d^2\sum\limits_{i=1}^d E|\mathfrak{D}_kF_i|^3\big)^{1/3}\notag\\
&\leq  M_3(U_0g) d^2 \sum\limits_{i=1}^d \sum\limits_{k=1}^nE|\mathfrak{D}_kF_i|^3.\label{o2md1}
\end{align}
{\it Step 3.} Inserting (\ref{iiflq1})  into (\ref{iiflq}) yields
\begin{align}
|E[g(F)]-E[g(Y)]|&=\big|E\big[\sum\limits_{i,j=1}^d(C_{ij}-Z^{(\alpha)}_{ij})\frac{\partial^2}{\partial x_i\partial x_j}U_0g(F)-R\big|,\notag\\
&\leq \big|E\big[\sum\limits_{i,j=1}^d(C_{ij}-Z^{(\alpha)}_{ij})\frac{\partial^2}{\partial x_i\partial x_j}U_0g(F)\big|+|R|,\notag\\
&\leq \sqrt{E[\|\mathrm{Hess}\,U_0g(F)\|^2_{H.S.}]}\sqrt{\sum\limits_{i,j=1}^dE|C_{ij}-Z^{(\alpha)}_{ij}|^2}+|R|.\notag
\end{align}
So we can obtain (\ref{jjflme}) by using (\ref{uudk1}), (\ref{uudk2}) and (\ref{o2md1}). To finish the proof we observe from Proposition \ref{lods3} that $E[Z^{(\alpha)}_{ij}]=\sigma_{ij}.$ Hence,
$$E|C_{ij}-Z^{(\alpha)}_{ij}|^2=|C_{ij}-\sigma_{ij}|^2+Var(Z^{(\alpha)}_{ij}),\,\,1\leq i,j\leq d.$$
By the elementary inequality $\sqrt{a+b}\leq \sqrt{a}+\sqrt{b}$ for any $a,b\geq 0,$ we get
$$\big(\sum\limits_{i,j=1}^dE|C_{ij}-Z^{(\alpha)}_{ij}|^2\big)^{1/2}\leq \sum\limits_{i,j=1}^d |C_{ij}-\sigma_{ij}|+\sum\limits_{i,j=1}^d\sqrt{Var(Z^{(\alpha)}_{ij})}.$$
So we obtain (\ref{9hlw3g}) from (\ref{jjflme}).
\end{proof}
\subsection{Slepian's interpolation method}
We observe that Stein's method requires the positive definite property of covariance matrix $C.$ In addition, the operator norms of $C$ and $C^{-1}$ are not easy to compute in many practical problems. Slepian's interpolation method will help us to avoid these disadvantages. However, the price to pay is that we have to use the test functions in $\mathcal{C}^3(\mathbb{R}^d)$ instead of $\mathcal{C}^2(\mathbb{R}^d).$ The next theorem contains the second main result of the present paper.
\begin{thm}\label{ddlm3} Let $Y$ be a centered $d$-dimensional Gaussian vector with covariance matrix $C=(C_{ij})_{d\times d}$ (not
necessarily positive definite). Suppose that $F=(F_1,...,F_d)$ is a $\mathbb{R}^d$-valued random vector such that $E[F_i]=0$ and $\sigma_{ij}:=E[F_iF_j]<\infty$ for all $0\leq i,j\leq d.$ Then, for any  $\alpha\in[0,1]$ and $g\in \mathcal{C}^3(\mathbb{R}^d)$ with $\|g''\|_\infty+\|g'''\|_\infty<\infty,$ we have
\begin{align}
|E[g(F)]&-E[g(Y)]|\leq B_3\sum\limits_{i,j=1}^dE|C_{ij}-Z^{(\alpha)}_{ij}|+B_4 \sum\limits_{i=1}^d \sum\limits_{k=1}^nE|\mathfrak{D}_k F_i|^3\label{9hlf4dj}\\
&\leq B_3\sum\limits_{i,j=1}^d|C_{ij}-\sigma_{ij}|+B_3\sum\limits_{i,j=1}^d \sqrt{Var(Z^{(\alpha)}_{ij})}+B_4\sum\limits_{i=1}^d \sum\limits_{k=1}^nE|\mathfrak{D}_k F_i|^3,\label{9hlw3g3}
\end{align}
where $B_3:=\frac{\|g''\|_\infty}{2} ,B_4:=\frac{\|g'''\|_\infty d^2}{3}$ and $Z^{(\alpha)}_{ij},1\leq i,j\leq d$ are as in Theorem \ref{k1opa4}.

\end{thm}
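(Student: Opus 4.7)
The plan is to replace Stein's method and its kernel $U_0g$ by Slepian's smart path interpolation, while re-using Step 2 (the covariance formula of Proposition \ref{lods3} combined with the approximate chain rule of Proposition \ref{lods33}) essentially verbatim. Take $Y$ to be independent of $F$, set $Z_t:=\sqrt{t}\,F+\sqrt{1-t}\,Y$, and introduce $\varphi(t):=E[g(Z_t)]$, so that $E[g(F)]-E[g(Y)]=\int_0^1\varphi'(t)\,dt$. Differentiating under the expectation and applying Gaussian integration by parts to the $Y$-side (conditional on $F$) yields
$$\varphi'(t)=\frac{1}{2\sqrt{t}}\sum_{j=1}^d E[F_j\,\partial_j g(Z_t)]-\frac{1}{2}\sum_{i,j=1}^d C_{ij}\,E[\partial_i\partial_j g(Z_t)].$$

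For each $j$, condition on $Y$ and view $f_j^{t,Y}(x):=\partial_j g(\sqrt{t}\,x+\sqrt{1-t}\,Y)$ as a $\mathcal{C}^2$ function of $x\in\mathbb{R}^d$. Since $E[F_j]=0$, Proposition \ref{lods3} gives $E[F_jf_j^{t,Y}(F)\mid Y]=E\big[\sum_k \mathfrak{D}_k f_j^{t,Y}(F)\,\mathfrak{D}^{(\alpha)}_k F_j\,\big|\,Y\big]$, and Proposition \ref{lods33} expands
$$\mathfrak{D}_k f_j^{t,Y}(F)=\sqrt{t}\sum_{i=1}^d\partial_i\partial_j g(Z_t)\,\mathfrak{D}_k F_i+\sum_{i,l=1}^d R^{(k,j)}_{il}(t,Y),$$
where the factor $\sqrt{t}$ is the gradient scaling of $f_j^{t,Y}$ in $x$, and the remainder satisfies $|R^{(k,j)}_{il}|\le\tfrac{t}{2}\|g'''\|_\infty\big[(\mathfrak{d}_k F_i)^2+(\mathfrak{d}_k F_l)^2\big]$ because the Hessian of $f_j^{t,Y}$ in $x$ is bounded by $t\,\|g'''\|_\infty$. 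Summing over $j$, un-conditioning in $Y$, and recognising $\sum_k\mathfrak{D}_k F_i\,\mathfrak{D}^{(\alpha)}_k F_j=Z^{(\alpha)}_{ij}$ yields the clean representation
$$\varphi'(t)=\frac{1}{2}\sum_{i,j=1}^d E\big[\partial_i\partial_j g(Z_t)\,(Z^{(\alpha)}_{ij}-C_{ij})\big]+\frac{R_t}{2\sqrt{t}},$$
in which the $\sqrt{t}$ supplied by the chain rule exactly cancels the $1/(2\sqrt{t})$ prefactor in front of the $F$-term.

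Integrating over $t\in[0,1]$ and taking absolute values, the first piece is bounded uniformly by $\tfrac{\|g''\|_\infty}{2}\sum_{i,j}E|Z^{(\alpha)}_{ij}-C_{ij}|$, which is precisely $B_3\sum_{i,j}E|C_{ij}-Z^{(\alpha)}_{ij}|$. For the remainder, the H\"older manipulations at the end of Step 2 in the proof of Theorem \ref{k1opa4} apply word for word, with $\|(U_0g)'''\|_\infty$ replaced by $t\,\|g'''\|_\infty$, giving $|R_t|\le t\,\|g'''\|_\infty d^2\sum_{i,k}E|\mathfrak{D}_k F_i|^3$. Since $\int_0^1\tfrac{t}{2\sqrt{t}}\,dt=\tfrac{1}{3}$, the remainder contributes exactly $B_4\sum_{i,k}E|\mathfrak{D}_k F_i|^3$ with $B_4=\tfrac{\|g'''\|_\infty d^2}{3}$, and this proves (\ref{9hlf4dj}). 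The second bound (\ref{9hlw3g3}) then follows from the identity $E[Z^{(\alpha)}_{ij}]=\sigma_{ij}$ (guaranteed by Proposition \ref{lods3}) combined with the triangle inequality $E|C_{ij}-Z^{(\alpha)}_{ij}|\le |C_{ij}-\sigma_{ij}|+\sqrt{Var(Z^{(\alpha)}_{ij})}$, exactly as in the closing lines of the proof of Theorem \ref{k1opa4}.

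The main point of care is the bookkeeping of the scaling factors along the interpolation: the first derivative of $f_j^{t,Y}$ in $x$ supplies a $\sqrt{t}$ that cancels the $1/(2\sqrt{t})$ coming from $\varphi'(t)$, while its second derivative supplies a $t$ that, combined with the same prefactor, produces an integrand behaving like $\sqrt{t}$ and hence the constant $1/3$. All other steps — in particular the H\"older chain converting $(\mathfrak{d}_k F_i)^2|\mathfrak{D}^{(\alpha)}_k F_j|$ into $d^2\sum_i E|\mathfrak{D}_k F_i|^3$ — are identical to those already executed in Theorem \ref{k1opa4}.
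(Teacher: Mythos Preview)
Your proof is correct and follows essentially the same route as the paper's: Slepian interpolation, Gaussian integration by parts on the $Y$-side, and the covariance formula plus approximate chain rule (Propositions \ref{lods3} and \ref{lods33}) on the $F$-side, with the same H\"older estimate on the remainder. The only difference is the immaterial reparametrization $t\leftrightarrow 1-t$ (the paper sets $H(t)=E[g(\sqrt{1-t}\,F+\sqrt{t}\,Y)]$, so its remainder carries a factor $(1-t)$ and is divided by $2\sqrt{1-t}$, yielding the same $\int_0^1\tfrac{\sqrt{1-t}}{2}\,dt=\tfrac{1}{3}$).
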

\begin{proof}{\it Step 1.} Without loss of generality, we can assume that $F$ and $Y$ are independent. Consider the Slepian's interpolation function $H(t)$ defined by
$$H(t)=E[g(\sqrt{1-t}F+\sqrt{t}Y)],\,\,t\in [0,1].$$
Clearly, $H(t)$ is differentiable on $(0,1)$ and its derivative is given by
\begin{align}
&H'(t)=E\big[\sum\limits_{j=1}^d\frac{\partial}{\partial x_j}g(\sqrt{1-t}F+\sqrt{t}Y)\big(-\frac{F_j}{2\sqrt{1-t}}+\frac{Y_j}{2\sqrt{t}}\big)\big]\notag\\
&=\frac{E\big[\sum\limits_{j=1}^d\frac{\partial}{\partial x_j}g(\sqrt{1-t}F+\sqrt{t}Y)Y_j\big]}{2\sqrt{t}}-\frac{E\big[\sum\limits_{j=1}^d\frac{\partial}{\partial x_j}g(\sqrt{1-t}F+\sqrt{t}Y)F_j\big]}{2\sqrt{1-t}}.\label{k9}
\end{align}
{\it Step 2.} By using Stein's identity (see, e.g. Appendix A.6 in \cite{Talagrand2003}) we obtain
\begin{align}
E\big[\sum\limits_{j=1}^d\frac{\partial}{\partial x_j}g(\sqrt{1-t}F+\sqrt{t}Y)Y_j\big]&=E\bigg[E\big[\sum\limits_{j=1}^d\frac{\partial}{\partial x_j}g(\sqrt{1-t}a+\sqrt{t}Y)Y_j\big]|_{a=F}\bigg]\notag\\
&=\sqrt{t}E\big[\sum\limits_{i,j=1}^d\frac{\partial^2}{\partial x_i\partial x_j}g(\sqrt{1-t}F+\sqrt{t}Y)C_{ij}\big].\label{steinko2}
\end{align}
Fixed $t\in(0,1),b\in \mathbb{R}^d,$ we consider the functions $f^{t,b}_j(x):=\frac{\partial}{\partial x_j}g(\sqrt{1-t}x+\sqrt{t}b),\,\,j=1,...,d,\,\,x\in \mathbb{R}^d.$ Notice that
$$\frac{\partial}{\partial x_i}f^{t,b}_j(x)=\sqrt{1-t}\frac{\partial^2}{\partial x_i \partial x_j}g(\sqrt{1-t}x+\sqrt{t}b),$$
$$\frac{\partial^2}{\partial x_i \partial x_l}f^{t,b}_j(x)=(1-t)\frac{\partial^3}{\partial x_i \partial x_l \partial x_j}g(\sqrt{1-t}x+\sqrt{t}b).$$
For each $j=1,...,d,$ we apply Proposition \ref{lods33} to $f^{t,b}_j$ and we obtain
$$\mathfrak{D}_kf^{t,b}_j(F)=\sqrt{1-t}\sum\limits_{i=1}^d \frac{\partial^2}{\partial x_i \partial x_j}g(\sqrt{1-t}F+\sqrt{t}b) \mathfrak{D}_kF_i+\sum\limits_{i,l=1}^d R^{(k,f^{t,b}_j)}_{il},\,\,1\leq k\leq n,$$
where the remainder terms $R^{(k,f^{t,b}_j)}_{il}$ are bounded by
\begin{equation*}
|R^{(k,f^{t,b}_j)}_{il}|\leq \frac{ \|(f^{t,b}_j)''\|_\infty}{2}[(\mathfrak{d}_kF_i)^2+(\mathfrak{d}_kF_l)^2]\leq \frac{(1-t) \|g'''\|_\infty}{2}[(\mathfrak{d}_kF_i)^2+(\mathfrak{d}_kF_l)^2].
\end{equation*}
We therefore can write
\begin{equation}\label{llmkod5}
E\big[\sum\limits_{j=1}^dF_jf^{t,b}_j(F)\big]=\sqrt{1-t}E\big[\sum\limits_{i,j=1}^d\frac{\partial^2}{\partial x_i\partial x_j}g(\sqrt{1-t}F+\sqrt{t}b)Z^{(\alpha)}_{ij}\big]+R^{t,b}.
\end{equation}
By using the same arguments as in the proof of (\ref{o2md1}), the remainder $R^{t,b}$ satisfies
\begin{equation}\label{ll35do}
|R^{t,b}|\leq (1-t) \|g'''\|_\infty d^2 \sum\limits_{i=1}^d \sum\limits_{k=1}^nE|\mathfrak{D}_kF_i|^3\,\,\forall\,t\in(0,1),b\in \mathbb{R}^d.
\end{equation}
Since $F$ and $Y$ are independent, the relation (\ref{llmkod5}) gives us
\begin{align}
E\big[\sum\limits_{j=1}^d\frac{\partial}{\partial x_j}&g(\sqrt{1-t}F+\sqrt{t}Y)F_j\big]=E\bigg[E\big[\sum\limits_{j=1}^d\frac{\partial}{\partial x_j}g(\sqrt{1-t}F+\sqrt{t}b)F_j\big]|_{b=Y}\bigg]\notag\\
&=E\bigg[E\big[\sum\limits_{j=1}^dF_jf^{t,b}_j(F)\big]|_{b=Y}\bigg]\notag\\
&=\sqrt{1-t}E\big[\sum\limits_{i,j=1}^d\frac{\partial^2}{\partial x_i\partial x_j}g(\sqrt{1-t}F+\sqrt{t}Y)Z^{(\alpha)}_{ij}\big]+E[R^{t,Y}]\label{iim2}.
\end{align}
{\it Step 3.} Inserting (\ref{steinko2}) and (\ref{iim2}) into (\ref{k9}) yields
\begin{equation*}
H'(t)=\frac{1}{2}E\big[\sum\limits_{i,j=1}^d\frac{\partial^2}{\partial x_i\partial x_j}g(\sqrt{1-t}F+\sqrt{t}Y)(C_{ij}-Z^{(\alpha)}_{ij})\big]-\frac{E[R^{t,Y}]}{2\sqrt{1-t}}\,\,\forall\,t\in(0,1).
\end{equation*}
By (\ref{ll35do}), it holds that
\begin{align*}
&|H'(t)|\leq \frac{\|g''\|_\infty}{2}\sum\limits_{i,j=1}^dE|C_{ij}-Z^{(\alpha)}_{ij}|+\frac{\sqrt{1-t} \|g'''\|_\infty d^2}{2}  \sum\limits_{i=1}^d \sum\limits_{k=1}^nE|\mathfrak{D}_kF_i|^3\,\,\forall\,t\in(0,1).
\end{align*}
By the definition of $H(t)$ we obtain
\begin{align*}
|E[g(F)]&-E[g(Y)]|=|H(1)-H(0)|=\big|\int_0^1 H'(t)dt\big|\\
&\leq \frac{\|g''\|_\infty }{2}\sum\limits_{i,j=1}^dE|C_{ij}-Z^{(\alpha)}_{ij}|+\frac{\|g'''\|_\infty d^2}{3}  \sum\limits_{i=1}^d \sum\limits_{k=1}^nE|\mathfrak{D}_kF_i|^3.
\end{align*}
So (\ref{9hlf4dj}) is verified. Since $E|C_{ij}-Z^{(\alpha)}_{ij}|\leq |C_{ij}-\sigma_{ij}|+E|\sigma_{ij}-Z^{(\alpha)}_{ij}|\leq |C_{ij}-\sigma_{ij}|+ \sqrt{Var(Z^{(\alpha)}_{ij})},$ we obtain (\ref{9hlw3g3}) from (\ref{9hlf4dj}). This completes the proof.
\end{proof}
Let us end this section with some remarks.
\begin{rem}For one-dimensional nonlinear statistics, we refer the reader to the works made by Chatterjee \cite{Chatterjee2008} and Chen \& R\"ollin \cite{Chen2010}. The Stein's method approximation in the present paper is a natural extension of their approach to the multi-dimensional setting.
\end{rem}
\begin{rem} We have implicitly assumed that the bounds (\ref{9hlw3g}) and (\ref{9hlw3g3}) both involve finite quantities, as otherwise there is nothing to prove. The bounds (\ref{9hlw3g}) and (\ref{9hlw3g3}) only differ by multiplicative constants. Thus Stein's method and Slepian's interpolation method provide us the same criterion for proving the multivariate CLTs.
\end{rem}
\begin{rem}Generally, bounds for the rate of convergence defined via non-smooth test functions are more informative in practice. For instance, such bounds can  be  used  for  the  construction  of  confidence intervals. In this paper, we only discuss  the bounds defined via smooth test functions. However, we note that our bounds can be used to evaluate the bounds for non-smooth test functions. The reader can consult Corollary 7.3 in \cite{Nourdin2010b} and Section 3 in \cite{Reinert2009} for such evaluations.
\end{rem}

\section{Applications}\label{mettt02} In this section, we provide some examples to illustrate the applicability of our abstract results. Even though Theorems \ref{k1opa4} and \ref{ddlm3} are designed to handle very general functions of independent random variables, they prove to be surprisingly simple in studying CLTs for well-known functions such runs and quadratic forms.
\subsection{New normal approximation bounds  for Rademacher functionals}
In this subsection, we consider a very special case where $X_1,X_2,...,X_n$ are independent identically distributed Rademacher random variables, i.e. $P(X_i=1)=P(X_i=-1)=\frac{1}{2}.$ The $\mathbb{R}$-valued random variable $U:=U(X_1,X_2,...,X_n)$ is called a Rademacher functional. In the last years, the Malliavin-Stein method has been intensively used to study the normal approximation for Rademacher functionals (see \cite{Krokowski2017} and references therein). Our aim here is to show a connection between our technique with Malliavin-Stein method developed for Rademacher functionals. As a consequence, we obtain new error bounds in the multivariate normal approximation for Rademacher functionals which are stated in terms of Malliavin derivative operator.

Let $F=(F_1,...,F_d)$ be a $\mathbb{R}^d$-valued random vector of centered Rademacher functionals and $Y$ be a centered $d$-dimensional Gaussian vector with covariance matrix $C=(C_{ij})_{d\times d}.$ The first multivariate results were obtained by Krokowski et al. in \cite{Krokowski2016}. Because of certain technical reasons, they have to use the test function of the class $\mathcal{C}^4(\mathbb{R}^d)$ to investigate the rate of convergence. In fact, they define the distance
$$d_4(F,Y):=\sup\limits_{\|g^{(k)}\|_\infty\leq 1,k=1,...,4}|E[g(F)]-E[g(Y)]|,$$
and established the following bound (see Theorem 5.1 of \cite{Krokowski2016})
\begin{align}\label{9km3}
d_4(F,Y)\leq \frac{d}{2}\big(\sum\limits_{i,j=1}^dE|C_{ij}-T_{ij}|^2\big)^{1/2}+\frac{5}{3}\sum\limits_{k=1}^nE\big[\big(\sum\limits_{i=1}^d|D_k F_i|\big)^3\big(\sum\limits_{i=1}^d|D_kL^{-1} F_i|\big)\big],
\end{align}
where $T_{ij}:=\sum\limits_{k=1}^nD_k F_i D_k L^{-1}F_j,$ $D$ denotes the discrete Malliavin derivative operator and $L^{-1}$ is the pseudo-inverse of Ornstein-Uhlenbeck operator. The reader can consult \cite{Privault2008} for more details about Malliavin calculus of Rademacher functionals. We only recall here that, for $U:=U(X_1,X_2,...,X_n),$
$$D_kU:=\frac{U^+_k-U^-_k}{2},\,\,1\leq k\leq n,$$
where $U^+_k=U(X_1,...,X_{k-1},+1,X_{k+1}...,X_n)$ and $U^-_k=U(X_1,...,X_{k-1},-1,X_{k+1}...,X_n).$

\begin{lem}\label{lemma41} The difference operator $\mathfrak{D}$ relates to Malliavin derivative operator $D$ as follows
$$\mathfrak{D}_kU=X_kD_kU,\,\,1\leq k\leq n.$$
\end{lem}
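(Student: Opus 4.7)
The plan is to prove this identity by the simplest route: expand $U$ with respect to the binary variable $X_k$ and compute both $\mathfrak{D}_k U$ and $X_k D_k U$ explicitly. Since $X_k \in \{-1,+1\}$, the indicators $\frac{1+X_k}{2}$ and $\frac{1-X_k}{2}$ select the two possible values of $U$ in coordinate $k$, which gives the fundamental decomposition
\begin{equation*}
U \;=\; \tfrac{1+X_k}{2}\,U_k^{+} \;+\; \tfrac{1-X_k}{2}\,U_k^{-}.
\end{equation*}
This is the only structural input one needs; everything else is arithmetic.

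First I would verify this decomposition by checking the two cases $X_k = +1$ and $X_k = -1$, noting that $U_k^{+}$ and $U_k^{-}$ do not depend on $X_k$. Next I would compute $E_k[U]$, i.e.\ the expectation with respect to the single coordinate $X_k$. Since $P(X_k = 1) = P(X_k = -1) = \tfrac{1}{2}$, averaging the decomposition gives $E_k[U] = \tfrac12(U_k^{+} + U_k^{-})$, which is independent of $X_k$ as it should be.

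Subtracting these two expressions yields
\begin{equation*}
\mathfrak{D}_k U \;=\; U - E_k[U] \;=\; \tfrac{X_k}{2}\,U_k^{+} - \tfrac{X_k}{2}\,U_k^{-} \;=\; X_k \cdot \tfrac{U_k^{+} - U_k^{-}}{2} \;=\; X_k D_k U,
\end{equation*}
which is exactly the claimed identity. There is no real obstacle here: the statement is essentially a translation between two notations for the same first-order difference, and the only thing to recognize is the indicator representation of a $\pm 1$ random variable. The brevity of the argument is precisely why this lemma is positioned as a bridge between the abstract operator $\mathfrak{D}_k$ of Section~\ref{vml4d9} and the Malliavin derivative $D_k$ used in the Rademacher literature, so that bounds such as (\ref{9hlw3g}) and (\ref{9hlw3g3}) can be rewritten against (\ref{9km3}) for direct comparison.
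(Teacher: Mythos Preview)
Your proof is correct and essentially identical to the paper's: both decompose $U$ according to the two values of $X_k$, compute $E_k[U]=\tfrac12(U_k^++U_k^-)$, and subtract. The only cosmetic difference is that the paper writes the indicators as $\ind_{\{X_k=1\}}$ and $\ind_{\{X_k=-1\}}$ rather than $\tfrac{1+X_k}{2}$ and $\tfrac{1-X_k}{2}$, which are of course the same objects.
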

\begin{proof} For $k=1,...,n$ we have
\begin{align*}
\mathfrak{D}_kU&=U-E_k[U]=U^+_k\ind_{\{X_k=1\}}+U^-_k\ind_{\{X_k=-1\}}-\frac{U^+_k+U^-_k}{2}\\
&=\frac{U^+_k-U^-_k}{2}\ind_{\{X_k=1\}}+\frac{U^-_k-U^+_k}{2}\ind_{\{X_k=-1\}}\\
&=\frac{U^+_k-U^-_k}{2}(\ind_{\{X_k=1\}}-\ind_{\{X_k=-1\}})=X_kD_kU.
\end{align*}
This finishes the proof.
\end{proof}
We now apply the results of Section \ref{mettt01} to derive new normal approximation bounds  for Rademacher functionals. Following \cite{Peccati2010}, we consider the distances
$$d_3(F,Y):=\sup\limits_{\|g^{(k)}\|_\infty\leq 1,k=1,...,3}|E[g(F)]-E[g(Y)]|,$$
$$d_2(F,Y):=\sup\limits_{\|g\|_{Lip}\leq 1,M_2(g)\leq 1}|E[g(F)]-E[g(Y)]|.$$

\begin{thm}\label{thm4d1}Suppose that $F=(F_1,...,F_d)$ is a $\mathbb{R}^d$-valued random vector of Rademacher functionals such that $E[F_i]=0$ and $E[F_iF_j]<\infty$ for all $0\leq i,j\leq d.$ Then, for any  $\alpha\in[0,1],$ we have
\begin{equation}\label{rade01}
d_3(F,Y)\leq \frac{d}{2}\big(\sum\limits_{i,j=1}^dE|C_{ij}-T^{(\alpha)}_{ij}|^2\big)^{1/2}+\frac{d^2}{3}\sum\limits_{i=1}^d \sum\limits_{k=1}^nE|D_k F_i|^3.
\end{equation}
Assume in addition that $C$ is positive definite, then
\begin{align}
&d_2(F,Y)\notag\\
&\leq \|C^{-1}\|_{op}\|C\|^{1/2}_{op}\big(\sum\limits_{i,j=1}^dE|C_{ij}-T^{(\alpha)}_{ij}|^2\big)^{1/2}+\frac{\sqrt{2\pi}\|C^{-1}\|^{3/2}_{op}\|C\|_{op}d^2}{4} \sum\limits_{i=1}^d \sum\limits_{k=1}^nE|D_k F_i|^3,\label{rade03}
\end{align}
where
$$T^{(\alpha)}_{ij}:=\sum\limits_{k=1}^nD_k F_i (\alpha E[D_kF_j|\mathcal{F}_{k-1}]+(1-\alpha)E[D_kF_j|\mathcal{G}_{k+1}]),\,1\leq i,j\leq d.$$
\end{thm}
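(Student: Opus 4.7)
The plan is to deduce both estimates directly from the abstract bounds in Theorems~\ref{k1opa4} and~\ref{ddlm3}, after rewriting the quantities $Z^{(\alpha)}_{ij}$ and $E|\mathfrak{D}_k F_i|^3$ in Malliavin language via Lemma~\ref{lemma41}. Since $\mathfrak{D}_k U = X_k D_k U$ and $|X_k|=1$, the cubic remainder terms transform trivially: $E|\mathfrak{D}_k F_i|^3 = E|D_k F_i|^3$, so the second sums on the right-hand sides of \eqref{jjflme} and \eqref{9hlf4dj} are exactly those appearing in \eqref{rade01} and \eqref{rade03}.

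The substantive step is the identification $Z^{(\alpha)}_{ij} = T^{(\alpha)}_{ij}$. Here I would exploit the fact that $D_k F_j = (F_j^+ - F_j^-)/2$ depends only on $(X_\ell)_{\ell\neq k}$, so $D_k F_j$ is independent of $X_k$ and measurable with respect to $\mathcal{F}_{k-1}\vee\mathcal{G}_{k+1}$. Pulling the $\mathcal{F}_k$-measurable factor $X_k$ out of the conditional expectation and then dropping $X_k$ from the conditioning $\sigma$-field by independence yields
\begin{align*}
E[\mathfrak{D}_k F_j \mid \mathcal{F}_k] &= X_k E[D_k F_j \mid \mathcal{F}_k] = X_k E[D_k F_j \mid \mathcal{F}_{k-1}],\\
E[\mathfrak{D}_k F_j \mid \mathcal{G}_k] &= X_k E[D_k F_j \mid \mathcal{G}_{k+1}].
\end{align*}
Hence $\mathfrak{D}^{(\alpha)}_k F_j = X_k\bigl(\alpha E[D_k F_j\mid \mathcal{F}_{k-1}] + (1-\alpha) E[D_k F_j\mid \mathcal{G}_{k+1}]\bigr)$, and multiplying by $\mathfrak{D}_k F_i = X_k D_k F_i$ and using $X_k^2 = 1$ gives $Z^{(\alpha)}_{ij} = T^{(\alpha)}_{ij}$.

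With these identifications in place, \eqref{rade03} follows directly from \eqref{jjflme} of Theorem~\ref{k1opa4} applied to $g$ with $\|g\|_{Lip}\leq 1$ and $M_2(g)\leq 1$; both terms translate term-by-term into the claimed bound with $B_1=\|C^{-1}\|_{op}\|C\|_{op}^{1/2}$ and $B_2=\tfrac{\sqrt{2\pi}}{4}\|C^{-1}\|_{op}^{3/2}\|C\|_{op} d^2$. For \eqref{rade01}, I would apply Theorem~\ref{ddlm3} with $\|g^{(k)}\|_\infty\leq 1$ for $k=2,3$ so that $B_3=1/2$ and $B_4=d^2/3$, producing an $\ell^1$-type deviation term $\tfrac{1}{2}\sum_{i,j}E|C_{ij}-T^{(\alpha)}_{ij}|$. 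Converting this into the $\ell^2$-type term $\tfrac{d}{2}\bigl(\sum_{i,j}E|C_{ij}-T^{(\alpha)}_{ij}|^2\bigr)^{1/2}$ stated in \eqref{rade01} is immediate from Cauchy--Schwarz over the $d^2$ indices together with Jensen's inequality $(E|X|)^2\leq E|X|^2$.

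No serious obstacle is present: the whole argument reduces to one bookkeeping identity ($Z^{(\alpha)}_{ij} = T^{(\alpha)}_{ij}$) whose only nontrivial ingredient is the $X_k$-independence of $D_k F_j$, and one application of Cauchy--Schwarz. The choice to invoke Theorem~\ref{ddlm3} for $d_3$ (rather than Theorem~\ref{k1opa4}) is deliberate, so as to avoid any positive-definiteness assumption on $C$ in the first estimate; the positive-definite case reappears naturally when we pass to the smaller test class used to define $d_2$.
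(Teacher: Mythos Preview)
Your proposal is correct and follows essentially the same route as the paper's own proof: identify $Z^{(\alpha)}_{ij}=T^{(\alpha)}_{ij}$ via Lemma~\ref{lemma41} and the $X_k$-independence of $D_kF_j$, observe $E|\mathfrak{D}_kF_i|^3=E|D_kF_i|^3$, then read off \eqref{rade03} from \eqref{jjflme} and \eqref{rade01} from \eqref{9hlf4dj} after a Cauchy--Schwarz step over the $d^2$ indices. Your explicit remark that Theorem~\ref{ddlm3} is used for $d_3$ precisely to avoid positive definiteness of $C$ matches the paper's intent.
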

\begin{proof}Recalling the definition of $Z^{(\alpha)}_{ij}$ given in Theorem \ref{k1opa4}, we obtain from Lemma \ref{lemma41} that
\begin{align*}
Z^{(\alpha)}_{ij}
&=\sum\limits_{k=1}^nX_kD_k F_i (\alpha E[X_kD_kF_j|\mathcal{F}_k]+(1-\alpha)E[X_kD_kF_j|\mathcal{G}_k])\\
&=\sum\limits_{k=1}^nD_k F_i (\alpha E[D_kF_j|\mathcal{F}_k]+(1-\alpha)E[D_kF_j|\mathcal{G}_k]),\,1\leq i,j\leq d.
\end{align*}
Since $D_kF_j$ is independent of $X_k$ for all $1\leq j\leq d,$ this implies that $E[D_kF_j|\mathcal{F}_k]=E[D_kF_j|\mathcal{F}_{k-1}]$ and $E[D_kF_j|\mathcal{G}_k]=E[D_kF_j|\mathcal{G}_{k+1}].$ Hence, $Z^{(\alpha)}_{ij}=T^{(\alpha)}_{ij}$ for all $1\leq i,j\leq d.$ On the other hand, we have $E|\mathfrak{D}_k F_i|^3=E|D_k F_i|^3$ for all $1\leq k\leq n,1\leq i\leq d.$

Thus, by using the Cauchy-Schwarz inequality, (\ref{rade01}) follows from (\ref{9hlf4dj}). Similarly, (\ref{rade03}) follows from (\ref{jjflme}).

The proof of Theorem is complete.
\end{proof}
\begin{rem} When $\alpha=1,$ the random variable $T^{(1)}_{ii}=\sum\limits_{k=1}^nD_k F_i E[D_kF_i|\mathcal{F}_{k-1}]$ was already used by Privault \& Torrisi to investigate the univariate normal approximation for $F_i.$ Thus our Theorem \ref{thm4d1} can be considered as a multivariate extension of Theorem 3.2 in \cite{Privault2015}.
\end{rem}
\begin{rem} The random variables $T_{ij}$ in (\ref{9km3}) are defined via the Ornstein-Uhlenbeck operator. Meanwhile, our random variables $T^{(\alpha)}_{ij}$ require the computation of conditional expectations. Hence, the bound (\ref{9km3}) and our bounds (\ref{rade01}), (\ref{rade03}) provide different ways to verify the multivariate CLTs for Rademacher functionals.
\end{rem}

\subsection{Runs}
Let $m_1,...,m_d$ be positive integer numbers such that $m_1\leq ...\leq m_d$ and $X_1,...,X_{n+m_d-1}$ be independent $\mathbb{R}$-valued random variables with means $\mu_i=E[X_i]$ and finite fourth moments. For each $j=1,...,d,$ we consider the $m_j$-run $F^{(m_j)}$ defined by
$$F^{(m_j)}:=\sum\limits_{i=1}^n a^{(m_j)}_{i,...,i+m_j-1}(X_i...X_{i+m_j-1}-\mu_i...\mu_{i+m_j-1}),$$
where $ a^{(m_j)}_{i,...,i+m_j-1},1\leq i\leq n,1\leq j\leq d$ are real numbers. The reader can consult the monograph \cite{Balakrishnan2002} for more details about the runs. In this subsection, we investigate the multivariate normal approximation for the vector
$$F:=(F^{(m_1)},...,F^{(m_d)}).$$
\begin{thm}\label{oolmd5}Let $Y$ be a centered $d$-dimensional Gaussian vector with the same covariance matrix as that of $F.$ For any $g\in \mathcal{C}^3(\mathbb{R}^d)$ with $\|g''\|_\infty+\|g'''\|_\infty<\infty,$ we have
\begin{multline}\label{komvv4s}
|E[g(F)]-E[g(Y)]|\leq \sqrt{2}\|g''\|_\infty d\sum\limits_{i=1}^d m_i^3 \sqrt{y^{m_i-1}_{1}y_{2}\sum\limits_{k=1}^n |a^{(m_i)}_{k,...,k+m_i-1}|^4}\\+\frac{\|g'''\|_\infty d^2}{3}\sum\limits_{i=1}^d m_i^3x^{m_i-1}_{1}x_{2}\sum\limits_{k=1}^n |a^{(m_i)}_{k,...,k+m_i-1}|^3,
\end{multline}
where $x_{1}:=\max\limits_{1\leq i\leq n+m_d-1}E|X_i|^3,$ $x_{2}:=\max\limits_{1\leq i\leq n+m_d-1} E|X_i-\mu_i|^3,$ $y_{1}:=\max\limits_{1\leq i\leq n+m_d-1}E|X_i|^4,$ $y_{2}:=\max\limits_{1\leq i\leq n+m_d-1} E|X_i-\mu_i|^4.$
\end{thm}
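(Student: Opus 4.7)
The plan is to apply Theorem \ref{ddlm3} with $C$ equal to the covariance matrix of $F$ itself, so that the summand $\sum_{i,j}|C_{ij}-\sigma_{ij}|$ vanishes identically. The parameter $\alpha\in[0,1]$ plays no role in what follows and may be kept arbitrary. What remains is to control $\sum_{k}E|\mathfrak{D}_{k}F_{i}|^{3}$ and $\sqrt{Var(Z^{(\alpha)}_{ij})}$ by exploiting the special structure of runs.

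The key structural fact, driving both bounds, is the factorization
\[
\mathfrak{D}_{k}F_{i}=(X_{k}-\mu_{k})\,S_{k,i},\qquad S_{k,i}:=\sum_{\substack{l:\,l\leq k\leq l+m_{i}-1\\1\leq l\leq n}}a^{(m_{i})}_{l,\ldots,l+m_{i}-1}\prod_{\substack{j\neq k\\l\leq j\leq l+m_{i}-1}}X_{j},
\]
where $S_{k,i}$ is a sum of at most $m_{i}$ terms and is independent of $X_{k}$. Conditioning yields the parallel identity $\mathfrak{D}^{(\alpha)}_{k}F_{j}=(X_{k}-\mu_{k})T_{k,j}$, with $T_{k,j}$ a convex combination of conditional expectations of $S_{k,j}$, still independent of $X_{k}$. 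For the third-moment term, Jensen's inequality on $|S_{k,i}|^{3}$ combined with independence yields $E|\mathfrak{D}_{k}F_{i}|^{3}\leq m_{i}^{2}x_{2}x_{1}^{m_{i}-1}\Sigma^{(i)}_{3,k}$, where $\Sigma^{(i)}_{3,k}$ is the sum of $|a^{(m_{i})}_{l,\ldots,l+m_{i}-1}|^{3}$ over those runs that contain index $k$; since each run is hit by exactly $m_{i}$ values of $k$, summing produces $\sum_{k}E|\mathfrak{D}_{k}F_{i}|^{3}\leq m_{i}^{3}x_{1}^{m_{i}-1}x_{2}\sum_{l}|a^{(m_{i})}_{l,\ldots,l+m_{i}-1}|^{3}$, which after multiplication by $B_{4}=\|g'''\|_{\infty}d^{2}/3$ is precisely the second summand of the theorem.

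The main obstacle is the variance of $Z^{(\alpha)}_{ij}$. Writing $Z^{(\alpha)}_{ij}-E[Z^{(\alpha)}_{ij}]=\sum_{k}W_{k}$ with $W_{k}:=\mathfrak{D}_{k}F_{i}\mathfrak{D}^{(\alpha)}_{k}F_{j}-E[\mathfrak{D}_{k}F_{i}\mathfrak{D}^{(\alpha)}_{k}F_{j}]$, the factorization confines the support of $W_{k}$ to $X_{k-m+1},\ldots,X_{k+m-1}$ for $m:=\max(m_{i},m_{j})$, so $Cov(W_{k},W_{k'})=0$ whenever $|k-k'|\geq 2m-1$; bounding the remaining covariances by $\tfrac{1}{2}(EW_{k}^{2}+EW_{k'}^{2})$ gives $Var(Z^{(\alpha)}_{ij})\leq(4m-3)\sum_{k}EW_{k}^{2}$. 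For $EW_{k}^{2}$ I would first factor $(X_{k}-\mu_{k})^{4}$ out of $E[(\mathfrak{D}_{k}F_{i}\mathfrak{D}^{(\alpha)}_{k}F_{j})^{2}]$, apply Cauchy--Schwarz to $E[S_{k,i}^{2}T_{k,j}^{2}]$, use the fourth-moment bounds $ES_{k,i}^{4}\leq m_{i}^{3}y_{1}^{m_{i}-1}\Sigma^{(i)}_{4,k}$ and $ET_{k,j}^{4}\leq m_{j}^{3}y_{1}^{m_{j}-1}\Sigma^{(j)}_{4,k}$ (the latter via conditional Jensen), and apply Cauchy--Schwarz once more over $k$ using $\sum_{k}\Sigma^{(i)}_{4,k}=m_{i}\sum_{l}|a^{(m_{i})}_{l,\ldots,l+m_{i}-1}|^{4}$ and similarly for $j$; this chain yields $\sum_{k}EW_{k}^{2}\leq m_{i}^{2}m_{j}^{2}\sqrt{A_{i}A_{j}}$ with $A_{i}:=y_{1}^{m_{i}-1}y_{2}\sum_{l}|a^{(m_{i})}_{l,\ldots,l+m_{i}-1}|^{4}$. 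Combining with the elementary inequalities $4m-3\leq 4(m_{i}+m_{j})\leq 8m_{i}m_{j}$ (the latter valid for integers $m_{i},m_{j}\geq 1$) upgrades this to $Var(Z^{(\alpha)}_{ij})\leq 8m_{i}^{3}m_{j}^{3}\sqrt{A_{i}A_{j}}$, and a final AM--GM step $(m_{i}m_{j})^{3/2}(A_{i}A_{j})^{1/4}=\sqrt{m_{i}^{3}\sqrt{A_{i}}\cdot m_{j}^{3}\sqrt{A_{j}}}\leq\tfrac{1}{2}(m_{i}^{3}\sqrt{A_{i}}+m_{j}^{3}\sqrt{A_{j}})$ produces the asymmetric bound $\sqrt{Var(Z^{(\alpha)}_{ij})}\leq\sqrt{2}\,(m_{i}^{3}\sqrt{A_{i}}+m_{j}^{3}\sqrt{A_{j}})$. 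Summing over $(i,j)$ contributes a factor $2d$, and multiplication by $B_{3}=\|g''\|_{\infty}/2$ recovers the first summand of the target. The delicate point is to keep the powers of $m_{i}$ and $m_{j}$ separated through these three nested inequalities so that the final estimate respects the asymmetric target rather than collapsing to a uniformly symmetric $\max(m_{i},m_{j})^{3}$ form.
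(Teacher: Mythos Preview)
Your argument is correct and reaches exactly the target bound, but it diverges from the paper's route at the variance step. The paper applies Theorem~\ref{ddlm3} with $\alpha=1$ and then controls $Var(Z_{ij})$ via the Efron--Stein inequality~(\ref{sopq1}): writing $Z_{ij}=\sum_k Z^{(k)}_{ij}$, it observes that $\mathfrak{D}_l Z_{ij}$ is a sum of at most $2(m_i\wedge m_j)-1$ nonzero terms $\mathfrak{D}_l Z^{(k)}_{ij}$, invokes Proposition~\ref{mcsk4}\,$(iv)$ to replace $E|\mathfrak{D}_l Z^{(k)}_{ij}|^2$ by $E|Z^{(k)}_{ij}|^2$, and then uses the elementary splitting $E|Z^{(k)}_{ij}|^2\leq\tfrac12(E|\mathfrak{D}_kF^{(m_i)}|^4+E|\mathfrak{D}_kF^{(m_j)}|^4)$; this yields $Var(Z_{ij})\leq 2(m_i^6A_i+m_j^6A_j)$ directly, without the Cauchy--Schwarz-over-$k$ and the final AM--GM that you need. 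Your approach instead exploits the explicit factorization $\mathfrak{D}_kF_i=(X_k-\mu_k)S_{k,i}$ to pull out $(X_k-\mu_k)^4$ and then relies on the banded covariance structure of the centered summands $W_k$, which is more elementary (no Efron--Stein, no Proposition~\ref{mcsk4}) and makes the dependence on the moment constants $x_1,x_2,y_1,y_2$ transparent. The trade-off is that your bandwidth involves $m_i\vee m_j$ rather than $m_i\wedge m_j$, forcing the additional inequalities $4m-3\leq 8m_im_j$ and AM--GM to recover the separated $m_i^3\sqrt{A_i}+m_j^3\sqrt{A_j}$ form; the paper's choice $\alpha=1$ combined with Efron--Stein produces that separation automatically. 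The third-moment estimate is handled essentially identically in both proofs.
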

\begin{proof}
 For any $g\in \mathcal{C}^3(\mathbb{R}^d)$ with $\|g''\|_\infty+\|g'''\|_\infty<\infty,$ Theorem \ref{ddlm3} with $\alpha=1$ gives us
\begin{equation}\label{komvv4}
|E[g(F)]-E[g(Y)]|\leq \frac{\|g''\|_\infty}{2}\sum\limits_{i,j=1}^d \sqrt{Var(Z_{ij})}+\frac{\|g'''\|_\infty d^2}{3}\sum\limits_{j=1}^d \sum\limits_{k=1}^{n+m_j-1}E|\mathfrak{D}_k F^{(m_j)}|^3,
\end{equation}
where $Z_{ij}:=\sum\limits_{k=1}^{n+(m_i\wedge m_j)-1}\mathfrak{D}_k F^{(m_i)} E[\mathfrak{D}_k F^{(m_j)}|\mathcal{F}_k],\,1\leq i,j\leq d.$


We put $X_i=X_1$ for all $i\geq n+m_d$ and use the convention $a^{(m_j)}_{i,...,i+m_j-1}=0$ if $i\leq 0$ or $i>n.$ Then, we have
$$\mathfrak{D}_kF^{(m_j)}=a^{(m_j)}_{k-m_j+1,...,k}X_{k-m_j+1}...X_{k-1}(X_k-\mu_k)+...+a^{(m_j)}_{k,...,k+m_j-1}(X_k-\mu_k)X_{k+1}...X_{k+m_j-1}$$
for all $k\geq 1.$ Hence, we can deduce
\begin{align*}
E|\mathfrak{D}_kF^{(m_j)}|^3&\leq m_j^2 \big(E|a^{(m_j)}_{k-m_j+1,...,k}X_{k-m_j+1}...X_{k-1}(X_k-\mu_k)|^3\\
&\hspace{1.5cm}+...+E|a^{(m_j)}_{k,...,k+m_j-1}(X_k-\mu_k)X_{k+1}...X_{k+m_j-1}|^3\big)\\
&\leq  m_j^2 x^{m_j-1}_{1}x_{2}\left(|a^{(m_j)}_{k-m_j+1,...,k}|^3+...+|a^{(m_j)}_{k,...,k+m_j-1}|^3\right),\,\,k\geq 1
\end{align*}
and
\begin{equation}\label{kortm4a}
\sum\limits_{k=1}^{n+m_j-1}E|\mathfrak{D}_kF^{(m_j)}|^3\leq  m_j^3 x^{m_j-1}_{1}x_{2}\sum\limits_{k=1}^n |a^{(m_j)}_{k,...,k+m_j-1}|^3.
\end{equation}
Similarly, we also have
\begin{equation}
\sum\limits_{k=1}^{n+m_j-1}E|\mathfrak{D}_kF^{(m_j)}|^4\leq  m_j^4 y^{m_j-1}_{1}y_{2}\sum\limits_{k=1}^n |a^{(m_j)}_{k,...,k+m_j-1}|^4.
\end{equation}
We write $$Z_{ij}=\sum\limits_{k=1}^{n+(m_i\wedge m_j)-1}Z^{(k)}_{ij},\,\,1\leq i,j\leq d,$$ where
$Z^{(k)}_{ij}:=\mathfrak{D}_k F^{(m_i)} E[\mathfrak{D}_k F^{(m_j)}|\mathcal{F}_k].$ Note that
$$E|Z^{(k)}_{ij}|^2=E|\mathfrak{D}_k F^{(m_i)} E[\mathfrak{D}_k F^{(m_j)}|\mathcal{F}_k]|^2\leq \frac{1}{2}\left(E|\mathfrak{D}_k F^{(m_i)}|^4+E|\mathfrak{D}_k F^{(m_j)}|^4\right).$$
Using the convention $Z^{(k)}_{ij}=0$ if $k\leq 0,$ we have
$$\mathfrak{D}_lZ_{ij}=\mathfrak{D}_lZ^{(l-(m_i\wedge m_j)+1)}_{ij}+...+\mathfrak{D}_lZ^{(l)}_{ij}+...+\mathfrak{D}_lZ^{(l+(m_i\wedge m_j)-1)}_{ij},\,\,1\leq l\leq n+(m_i\wedge m_j)-1$$
because $\mathfrak{D}_lZ^{(k)}_{ij}=0$ if $Z^{(k)}_{ij}$ does not depend on $X_l.$  By using the Cauchy-Schwarz inequality and then Proposition \ref{mcsk4}, $(iv)$ we obtain
\begin{align*}
&E|\mathfrak{D}_lZ_{ij}|^2\\
&\leq (2(m_i\wedge m_j)-1)\left(E|\mathfrak{D}_lZ^{(l-(m_i\wedge m_j)+1)}_{ij}|^2+...+E|\mathfrak{D}_lZ^{(l)}_{ij}|^2+...+E|\mathfrak{D}_lZ^{(l+(m_i\wedge m_j)-1)}_{ij}|^2\right)\\
&\leq (2(m_i\wedge m_j)-1)\left(E|Z^{(l-(m_i\wedge m_j)+1)}_{ij}|^2+...+E|Z^{(l)}_{ij}|^2+...+E|Z^{(l+(m_i\wedge m_j)-1)}_{ij}|^2\right)
\end{align*}
for $1\leq l\leq n+(m_i\wedge m_j)-1.$ We now use Efron-Stein inequality (\ref{sopq1}) to estimate $Var(Z_{ij}).$ We have
\begin{align*}Var(Z_{ij})&\leq \sum\limits_{l=1}^{n+(m_i\wedge m_j)-1}E|\mathfrak{D}_lZ_{ij}|^2\\
&\leq (2(m_i\wedge m_j)-1)^2 \sum\limits_{l=1}^{n+(m_i\wedge m_j)-1}E|Z^{(l)}_{ij}|^2\\
&\leq \frac{1}{2}(2(m_i\wedge m_j)-1)^2\left(\sum\limits_{l=1}^{n+m_i-1}E|\mathfrak{D}_lF^{(m_i)}|^4+\sum\limits_{l=1}^{n+m_j-1}E|\mathfrak{D}_lF^{(m_j)}|^4\right)\\
&\leq 2\left(m_i^2\sum\limits_{l=1}^{n+m_i-1}E|\mathfrak{D}_lF^{(m_i)}|^4+m_j^2\sum\limits_{l=1}^{n+m_j-1}E|\mathfrak{D}_lF^{(m_j)}|^4\right)\\
&\leq 2\left(m_i^6y^{m_i-1}_{1}y_{2}\sum\limits_{k=1}^n |a^{(m_i)}_{k,...,k+m_i-1}|^4+m_j^6 y^{m_j-1}_{1}y_{2}\sum\limits_{k=1}^n |a^{(m_j)}_{k,...,k+m_j-1}|^4\right).
\end{align*}
We therefore obtain
$$\sqrt{Var(Z_{ij})}\leq \sqrt{2}\left(m_i^3\sqrt{y^{m_i-1}_{1}y_{2}\sum\limits_{k=1}^n |a^{(m_i)}_{k,...,k+m_i-1}|^4}+m_j^3 \sqrt{y^{m_j-1}_{1}y_{2}\sum\limits_{k=1}^n |a^{(m_j)}_{k,...,k+m_j-1}|^4}\right)$$
and
\begin{equation}\label{kortm4}
\sum\limits_{i,j=1}^d \sqrt{Var(Z_{ij})}\leq 2\sqrt{2}d\sum\limits_{i=1}^d m_i^3\sqrt{y^{m_i-1}_{1}y_{2}\sum\limits_{k=1}^n |a^{(m_i)}_{k,...,k+m_i-1}|^4}.
\end{equation}
So we can get (\ref{komvv4s}) by inserting (\ref{kortm4a}) and (\ref{kortm4}) into (\ref{komvv4}). This completes the proof.
\end{proof}

\begin{rem}A very special case of Theorem \ref{oolmd5} has been discussed in \cite{Reinert2009}: Let $ X_i's$ be independent random variables with distribution $\text{Bernoulli}(p),0<p<1.$ We define $j$-run $W_j$ by
$$W_j=\sum\limits_{i=1}^n \frac{1}{\sqrt{np^j(1-p)}}(X_i...X_{i+j-1}-p^j),\,\,1\leq j\leq d$$
and consider the vector $W=(W_1,...,W_d).$ Let $Y$ be a centered $d$-dimensional Gaussian vector with covariance matrix $(\sigma_{ij})_{d\times d}$ defined by
$$\sigma_{ij}:=E[W_iW_j]=p^{|i-j|/2}\sum\limits_{k=0}^{i\wedge j-1}(|i-j|+1+2k)p^k.$$
Theorem 4.1 in \cite{Reinert2009} provides the following rate of convergence
\begin{equation}\label{klsm8}
|E[g(W)]-E[g(Y)]|\leq \frac{416d^{7/2}\|g''\|_\infty+960d^5\|g'''\|_\infty}{p^{d/2}(1-p)^{3/2}\sqrt{n}}.
\end{equation}
Let us now apply Theorem \ref{oolmd5} to $F=W.$ We have $m_i=i$ and $a^{(m_i)}_{k,...,k+m_i-1}=\frac{1}{\sqrt{np^i(1-p)}}$ for each $1\leq i\leq d.$ We also have $x_{1}=y_{1}=p,$ $x_{2}=(1-p)^3p+p^3(1-p)\leq p(1-p)$ and $y_{2}=(1-p)^4p+p^4(1-p)\leq p(1-p).$ Hence, it holds that
\begin{align*}
\sum\limits_{i=1}^d m_i^3x^{m_i-1}_{1}x_{2}\sum\limits_{k=1}^n |a^{(m_i)}_{k,...,k+m_i-1}|^3&\leq \sum\limits_{i=1}^d i^3p^i(1-p)\frac{n}{\sqrt{n^3p^{3i}(1-p)^3}}\\
&\leq \frac{d^3}{\sqrt{n(1-p)}}\sum\limits_{i=1}^d \frac{1}{p^{i/2}}\leq\frac{d^3}{\sqrt{n(1-p)}} \frac{2}{p^{d/2}(1-p)}
\end{align*}
and
$$\sum\limits_{i=1}^d m_i^3 \sqrt{y^{m_i-1}_{1}y_{2}\sum\limits_{k=1}^n |a^{(m_i)}_{k,...,k+m_i-1}|^4}\leq \frac{d^3}{\sqrt{n(1-p)}}\sum\limits_{i=1}^d \frac{1}{p^{i/2}}\leq\frac{d^3}{\sqrt{n(1-p)}} \frac{2}{p^{d/2}(1-p)}.$$
Combining the above estimates with (\ref{komvv4s}) we obtain
$$|E[g(W)]-E[g(Y)]|\leq \frac{2\sqrt{2}d^{4}\|g''\|_\infty+\frac{2}{3}d^5\|g'''\|_\infty}{p^{d/2}(1-p)^{3/2}\sqrt{n}},$$
which is better than (\ref{klsm8}) when, for example, the dimension $d$ such that $2\sqrt{2}d^{4}\leq 416d^{7/2}$ or $d\leq 21632.$
\end{rem}

\subsection{Multivariate CLT for quadratic forms}
Suppose $X_1,...,X_{n}$ are independent $\mathbb{R}$-valued random variables with zero means, unit variances and finite fourth moments. Let $A=(a^{(n)}_{uv})_{u,v=1}^n$  be a real symmetric matrix with vanishing diagonal, i.e. $a^{(n)}_{uv}=a^{(n)}_{vu}$ and $a^{(n)}_{uu}=0.$ The central limit theorem (CLT) for the quadratic form
$$W_n=\sum\limits_{1\leq u\leq v\leq n} a^{(n)}_{uv}X_uX_v$$
has been extensively discussed in the literature. The best known result given by de Jong \cite{deJong1987} says that the $\sigma_n^{-1}W_n$ converges to a standard normal random variable in distribution if
$$\text{$\sigma_n^{-4}\mathrm{Tr}(A^4)\to 0$ and $\sigma_n^{-2}\max\limits_{1\leq u\leq n}\sum\limits_{v=1}^n(a^{(n)}_{uv})^2\to 0$},$$
where $\sigma^2_n:=Var(W_n)=\sum\limits_{1\leq u\leq v\leq n} (a^{(n)}_{uv})^2.$ We recall that $\mathrm{Tr}(A^4)=\sum\limits_{u,v=1}^n \big(\sum\limits_{k=1}^na^{(n)}_{ku} a^{(n)}_{kv}\big)^2$ and the first condition is equivalent to $\sigma_n^{-4}E[W^4_n]\to 3.$

 In this section, we generalize this classical result to multi-dimensional setting. Let $A_i=(a^{(ni)}_{uv})_{u,v=1}^n$  be real symmetric matrices with vanishing diagonal, we define the quadratic forms
$$F^{(n)}_i:=\sum\limits_{1\leq u\leq v\leq n} a^{(ni)}_{uv}X_uX_v,\,\,1\leq i\leq d$$
and consider the $\mathbb{R}^d$-valued vector
$$F^{(n)}:=(F^{(n)}_1,...,F^{(n)}_{d}).$$
It is interesting to mention that the condition (\ref{condi02}) with $i=j$ is equivalent to the fourth moment condition required in Theorem 1.7 of \cite{Dobler2017b}, provided that $C_{ii}=1$ for $1\leq i\leq d.$
\begin{thm}\label{jong} Let $Y$ be a centered $d$-dimensional Gaussian vector with covariance matrix $C=(C_{ij})_{d\times d}$ (not
necessarily positive definite). Suppose that
\begin{equation}\label{condi01}
\lim\limits_{n\to\infty}E[F^{(n)}_iF^{(n)}_j]=\lim\limits_{n\to\infty}\sum\limits_{1\leq u\leq v\leq n}  a^{(ni)}_{uv} a^{(nj)}_{uv}=C_{ij},\,\,1\leq i,j\leq d,
\end{equation}
\begin{equation}\label{condi02}
\lim\limits_{n\to\infty}\sum\limits_{u,v=1}^n \big(\sum\limits_{k=1}^na^{(ni)}_{ku} a^{(nj)}_{kv}\big)^2=0,\,\,1\leq i,j\leq d,
\end{equation}
\begin{equation}\label{condi03}
\lim\limits_{n\to\infty}\max\limits_{1\leq u\leq n}\sum\limits_{v=1}^n(a^{(ni)}_{uv})^2=0,\,\,1\leq i\leq d.
\end{equation}
Then, $F^{(n)}$ converges to $Y$ in distribution as $n\to\infty.$ Moreover, we have the following bound for the rate of convergence
\begin{align}
&|E[g(F^{(n)})]-E[g(Y)]|\leq \frac{\|g''\|_\infty}{2}\sum\limits_{i,j=1}^d|C_{ij}-E[F^{(n)}_iF^{(n)}_j]|\notag\\
&+\frac{\|g''\|_\infty}{2^{3/2}}\sum\limits_{i,j=1}^d\sqrt{ \max\{2,\max\limits_{1\leq u\leq n}Var(X^2_u)\}\sum\limits_{u,v=1}^n \big(\sum\limits_{k=1}^na^{(ni)}_{ku} a^{(nj)}_{kv}\big)^2}\notag\\
&+\frac{\|g''\|_\infty}{2^{3/2}}\sum\limits_{i,j=1}^d \sqrt{8\max\limits_{1\leq v\leq n}Var(X^2_v)\max\limits_{1\leq v\leq n}E|X_v|^4\sum\limits_{k=1}^n\big(\sum\limits_{v=1}^n(a^{(ni)}_{kv})^2\big)\big(\sum\limits_{v=1}^n(a^{(nj)}_{kv})^2\big)}\notag\\
&+\frac{2^{3/2}\max\limits_{1\leq v\leq n}E|X_v|^4\|g'''\|_\infty d^2}{3}\sum\limits_{i=1}^d \sum\limits_{k=1}^n\big(\sum\limits_{u=1}^n(a^{(ni)}_{ku})^2\big)^{3/2},\label{jjdl5}
\end{align}
where $g\in \mathcal{C}^3(\mathbb{R}^d)$ with $\|g''\|_\infty+\|g'''\|_\infty<\infty.$
\end{thm}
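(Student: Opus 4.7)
The plan is to apply Theorem~\ref{ddlm3} with the symmetric choice $\alpha = 1/2$. A direct calculation, using $E[X_k]=0$, $a^{(ni)}_{kk}=0$ and symmetry of $A_i$, shows that
\[
\mathfrak{D}_k F^{(n)}_i = X_k S^{(i)}_k,\qquad S^{(i)}_k := \sum_{v=1}^n a^{(ni)}_{kv} X_v,
\]
where crucially $X_k$ is independent of $S^{(i)}_k$ because the diagonal of $A_i$ vanishes. Conditioning on $\mathcal{F}_k$ and $\mathcal{G}_k$ and averaging with weight $1/2$ then yields $\mathfrak{D}^{(1/2)}_k F^{(n)}_j = \tfrac{1}{2}\mathfrak{D}_k F^{(n)}_j$, so $Z^{(1/2)}_{ij} = \tfrac{1}{2}\sum_{k=1}^n X_k^2 S^{(i)}_k S^{(j)}_k$. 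The third-moment term of (\ref{jjdl5}) is then routine: independence gives $E|\mathfrak{D}_k F^{(n)}_i|^3 = E|X_k|^3 \cdot E|S^{(i)}_k|^3$; Cauchy--Schwarz bounds $E|S^{(i)}_k|^3 \leq (E|S^{(i)}_k|^2 \cdot E|S^{(i)}_k|^4)^{1/2}$, and the moments of $S^{(i)}_k$ and $X_k$ are estimated by direct expansion using the zero means and the fourth-moment assumption, producing the $2^{3/2}\max_v E|X_v|^4$ prefactor on $\sum_k(\sum_u(a^{(ni)}_{ku})^2)^{3/2}$.

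The main obstacle is bounding $Var(Z^{(1/2)}_{ij})$. My plan is to apply the Efron--Stein inequality (\ref{sopq1}). Writing $S^{(i)}_k = a^{(ni)}_{kl} X_l + R^{(i,l)}_k$ with $R^{(i,l)}_k := \sum_{v\neq k,l} a^{(ni)}_{kv} X_v$ independent of $X_l$, a direct computation gives
\[
\mathfrak{D}_l Z^{(1/2)}_{ij} = \tfrac{1}{2}(X_l^2-1)\Big[S^{(i)}_l S^{(j)}_l + \sum_{k\neq l} X_k^2 a^{(ni)}_{kl} a^{(nj)}_{kl}\Big] + \tfrac{1}{2} X_l \sum_{k\neq l} X_k^2 \big(a^{(ni)}_{kl} R^{(j,l)}_k + a^{(nj)}_{kl} R^{(i,l)}_k\big).
\]
Squaring, taking expectations and summing over $l$: the two $(X_l^2-1)$-type contributions produce, after independence plus Cauchy--Schwarz, the ``row-sum cross'' quantity $\sum_k(\sum_v(a^{(ni)}_{kv})^2)(\sum_v(a^{(nj)}_{kv})^2)$ with prefactor $\max_v Var(X_v^2) \cdot \max_v E|X_v|^4$, while expanding the square of the linear-in-$X_l$ piece over the four indices $u,v,u',v'$ and using pairwise independence gives the ``matrix-trace'' quantity $\sum_{u,v}(\sum_k a^{(ni)}_{ku} a^{(nj)}_{kv})^2$. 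The $\max\{2, \max Var(X_u^2)\}$ prefactor arises because the trace quantity is already nonzero for symmetric $\pm 1$ variables (where $X_u^2 \equiv 1$ and the $(X_l^2-1)$ pieces vanish identically), stemming from the variance of the off-diagonal $X_v X_{v'}$ chaos inside $Z^{(1/2)}_{ij}$. Organizing all the cross-terms cleanly and correctly identifying each algebraic piece with exactly one of the two geometric invariants appearing on the right of (\ref{jjdl5}) is the technical heart of the argument.

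With these ingredients assembled, plugging the third-moment bound and the two pieces of the variance bound into (\ref{9hlw3g3}) and using $\sqrt{a+b}\leq\sqrt{a}+\sqrt{b}$ produces (\ref{jjdl5}). The convergence $F^{(n)}\Rightarrow Y$ then follows by inspection of the four terms: (\ref{condi01}) kills the covariance discrepancy $|C_{ij}-E[F^{(n)}_iF^{(n)}_j]|$, (\ref{condi02}) kills the matrix-trace term, and (\ref{condi03}), combined with the boundedness $\|A_i\|_{HS}^2 = 2E[F^{(n)}_i]^2 \to 2C_{ii}$ inherited from (\ref{condi01}), drives both the row-sum variance term and the cubic third-moment term to zero via the elementary bound $\sum_k(\sum_v(a^{(ni)}_{kv})^2)^{3/2}\leq (\max_k\sum_v(a^{(ni)}_{kv})^2)^{1/2}\cdot\|A_i\|_{HS}^2$.
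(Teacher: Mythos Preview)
Your proposal is essentially correct and follows the same overall strategy as the paper: apply Theorem~\ref{ddlm3} with $\alpha=1/2$, compute $\mathfrak{D}_k F^{(n)}_i = X_k S^{(i)}_k$, bound the third moments, and control $Var(Z^{(1/2)}_{ij})$ via Efron--Stein. Your identification $\mathfrak{D}^{(1/2)}_k F^{(n)}_j = \tfrac12 \mathfrak{D}_k F^{(n)}_j$ (using $a^{(nj)}_{kk}=0$) is correct and gives the same $Z^{(1/2)}_{ij}=\tfrac12\sum_k X_k^2 S^{(i)}_k S^{(j)}_k$ as the paper. The convergence argument at the end is also the paper's.

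There are two organizational differences worth noting. First, for $E|\mathfrak{D}_k F^{(n)}_i|^3$, the paper invokes Rio's moment inequality to obtain the constant $2^{3/2}$ directly, whereas your Cauchy--Schwarz route $E|S^{(i)}_k|^3\le(E|S^{(i)}_k|^2\,E|S^{(i)}_k|^4)^{1/2}$ works but requires a bit more bookkeeping to reach the same constant. Second, and more substantively, for $Var(Z^{(1/2)}_{ij})$ the paper first writes $X_k^2 = 1 + (X_k^2-1)$ to split
\[
Z^{(1/2)}_{ij}=\tfrac12\big(Z^\star_{ij}+Z^{\ast}_{ij}\big),\qquad Z^\star_{ij}=\sum_{u,v}\Big(\sum_k a^{(ni)}_{ku}a^{(nj)}_{kv}\Big)X_uX_v,\quad Z^{\ast}_{ij}=\sum_k(X_k^2-1)S^{(i)}_kS^{(j)}_k,
\]
and then computes $Var(Z^\star_{ij})$ \emph{directly} as the variance of a quadratic form (this is exactly where the $\max\{2,\max_u Var(X_u^2)\}$ prefactor and the trace quantity $\sum_{u,v}(\sum_k a^{(ni)}_{ku}a^{(nj)}_{kv})^2$ appear), while Efron--Stein is applied only to $Z^{\ast}_{ij}$. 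Your decomposition of $\mathfrak{D}_l Z^{(1/2)}_{ij}$ by $X_l$-type instead leaves residual $X_k^2$ factors in the ``linear-in-$X_l$'' piece, so the expansion you sketch does not cleanly isolate the trace quantity --- you will pick up additional $E[X_k^4]$ contributions and non-trivial cross terms (including $E[X_l^3]$ between the $(X_l^2-1)$ and $X_l$ pieces). These can all be absorbed into the two geometric invariants, but getting the exact constants of (\ref{jjdl5}) this way is noticeably more laborious than the paper's split.
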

\begin{proof}We first use Theorem \ref{ddlm3} with $\alpha=\frac{1}{2}$ to verify the bound (\ref{jjdl5}). For any $g\in \mathcal{C}^3(\mathbb{R}^d)$ with $\|g''\|_\infty+\|g'''\|_\infty<\infty,$ we have
\begin{multline}\label{jjdl55}
|E[g(F^{(n)})]-E[g(Y)]|\leq \frac{\|g''\|_\infty}{2}\sum\limits_{i,j=1}^d|C_{ij}-E[F^{(n)}_iF^{(n)}_j]|\\
+\frac{\|g''\|_\infty}{2}\sum\limits_{i,j=1}^d \sqrt{Var(Z^{(\frac{1}{2})}_{ij})}
+\frac{\|g'''\|_\infty d^2}{3}\sum\limits_{i=1}^d \sum\limits_{k=1}^nE|\mathfrak{D}_k F^{(n)}_i|^3.
\end{multline}
For each $k=1,...,n$ we have $\mathfrak{D}_kF^{(n)}_i=X_k\sum\limits_{v=1}^n a^{(ni)}_{kv}X_v,$
$$E[\mathfrak{D}_kF^{(n)}_i|\mathcal{F}_k]=X_k\sum\limits_{v=1}^k a^{(ni)}_{kv}X_v\,\,\,\text{and}\,\,\,E[\mathfrak{D}_kF^{(n)}_i|\mathcal{G}_k]=X_k\sum\limits_{v=k}^n a^{(ni)}_{kv}X_v.$$
Then we obtain $Z^{(\frac{1}{2})}_{ij}=\frac{1}{2}(Z^\ast_{ij}+Z^\star_{ij}),\,\,1\leq i,j\leq d,$ where
$$Z^\ast_{ij}:=\sum\limits_{k=1}^n(X^2_k-1)\sum\limits_{v=1}^n a^{(i)}_{kv}X_v\sum\limits_{v=1}^n a^{(j)}_{kv}X_v,\,\,Z^\star_{ij}:=\sum\limits_{k=1}^n\sum\limits_{v=1}^n a^{(i)}_{kv}X_v\sum\limits_{v=1}^n a^{(j)}_{kv}X_v.$$
Hence,
$$Var(Z^{(\frac{1}{2})}_{ij})\leq \frac{1}{2}\left(Var(Z^\ast_{ij})+Var(Z^\star_{ij})\right),\,\,1\leq i,j\leq d.$$
To estimate $Var(Z^\ast_{ij}),$ we put
$$Z^{\ast (k)}_{ij}:=(X^2_k-1)\sum\limits_{v=1}^n a^{(ni)}_{kv}X_v\sum\limits_{v=1}^n a^{(nj)}_{kv}X_v,\,\,1\leq k\leq n.$$
We have $\mathfrak{D}_l Z^{\ast (k)}_{ij}=0$ if $l=k$ and for $l\neq k,$
$$\mathfrak{D}_l Z^{\ast (k)}_{ij}=(X^2_k-1)\bigg(a^{(ni)}_{kl}a^{(nj)}_{kl}(X^2_l-1)+a^{(ni)}_{kl}X_l\sum\limits_{v=1,v\neq l}^n a^{(nj)}_{kv}X_v+a^{(nj)}_{kl}X_l\sum\limits_{v=1,v\neq l}^n a^{(ni)}_{kv}X_v\bigg).$$
Hence,
$$\mathfrak{D}_l Z^\ast_{ij}=\sum\limits_{k=1,k\neq l}^n \mathfrak{D}_l Z^{\ast (k)}_{ij},\,\,1\leq l\leq n$$
and by Efron-Stein inequality (\ref{sopq1}) we obtain
\begin{align*}Var(Z^\ast_{ij})&\leq \sum\limits_{l=1}^n E|\mathfrak{D}_l Z^\ast_{ij}|^2= \sum\limits_{l=1}^n E\big|\sum\limits_{k=1,k\neq l}^n \mathfrak{D}_l Z^{\ast (k)}_{ij}\big|^2\\
&=\sum\limits_{l=1}^n \sum\limits_{k=1,k\neq l}^n E|\mathfrak{D}_l Z^{\ast (k)}_{ij}|^2+\sum\limits_{l=1}^n \sum\limits_{k\neq k';k,k'\neq l}E[ \mathfrak{D}_l Z^{\ast (k)}_{ij} \mathfrak{D}_l Z^{\ast (k')}_{ij}].
\end{align*}
By the independence and the elementary inequality $|a+b|^2\leq 2(a^2+b^2)$ we deduce
\begin{align*}
&E|\mathfrak{D}_l Z^{\ast (k)}_{ij}|^2\\
&=Var(X^2_k)\bigg((a^{(ni)}_{kl})^2(a^{(nj)}_{kl})^2Var(X^2_l)+E|a^{(ni)}_{kl}X_l\sum\limits_{v=1,v\neq l}^n a^{(nj)}_{kv}X_v+a^{(nj)}_{kl}X_l\sum\limits_{v=1,v\neq l}^n a^{(ni)}_{kv}X_v|^2\bigg)\\
&\leq Var(X^2_k)\bigg((a^{(ni)}_{kl})^2(a^{(nj)}_{kl})^2Var(X^2_l)+2(a^{(nj)}_{kl})^2\sum\limits_{v=1,v\neq l}^n (a^{(ni)}_{kv})^2+2 (a^{(ni)}_{kl})^2\sum\limits_{v=1,v\neq l}^k (a^{(nj)}_{kv})^2\bigg)\\
&\leq2\max\limits_{1\leq v\leq n}Var(X^2_v)\max\limits_{1\leq v\leq n}E|X_v|^4\bigg((a^{(nj)}_{kl})^2\sum\limits_{v=1}^n (a^{(ni)}_{kv})^2+ (a^{(ni)}_{kl})^2\sum\limits_{v=1}^k (a^{(nj)}_{kv})^2\bigg)
\end{align*}
and hence,
\begin{equation}\label{mmesk}
\sum\limits_{l=1}^n \sum\limits_{k=1,k\neq l}^n E|\mathfrak{D}_l Z^{\ast (k)}_{ij}|^2\leq 4\max\limits_{1\leq v\leq n}Var(X^2_v)\max\limits_{1\leq v\leq n}E|X_v|^4\sum\limits_{k=1}^n\big(\sum\limits_{v=1}^n(a^{(ni)}_{kv})^2\big)\big(\sum\limits_{v=1}^n(a^{(nj)}_{kv})^2\big).
\end{equation}
From the decomposition
$$\mathfrak{D}_l Z^{\ast (k)}_{ij}=(X^2_k-1)\bigg(A^{(k)}(\neq k,k')+a^{(ni)}_{kl}X_la^{(nj)}_{kk'}X_{k'}+a^{(nj)}_{kl}X_la^{(ni)}_{kk'}X_{k'}\bigg),$$
where the term $A^{(k)}(\neq k,k')$ does not depend on $X_k$ and $X_{k'},$ we obtain
\begin{align*}
&E[ \mathfrak{D}_l Z^{\ast (k)}_{ij} \mathfrak{D}_l Z^{\ast (k')}_{ij}]\\
&=E[(X^2_k-1)(X^2_{k'}-1)(a^{(ni)}_{kl}a^{(nj)}_{kk'}+a^{(nj)}_{kl}a^{(ni)}_{kk'})
(a^{(ni)}_{k'l}a^{(nj)}_{k'k}+a^{(nj)}_{k'l}a^{(ni)}_{k'k})X^2_lX_{k}X_{k'}]\\
&=E[(X^2_k-1)X_{k}]E[(X^2_{k'}-1)X_{k'}](a^{(ni)}_{kl}a^{(nj)}_{kk'}+a^{(nj)}_{kl}a^{(ni)}_{kk'})
(a^{(ni)}_{k'l}a^{(nj)}_{k'k}+a^{(nj)}_{k'l}a^{(ni)}_{k'k})\\
&\leq \max\limits_{1\leq v\leq n}Var(X^2_v)(a^{(ni)}_{kl}a^{(nj)}_{kk'}+a^{(nj)}_{kl}a^{(ni)}_{kk'})
(a^{(ni)}_{k'l}a^{(nj)}_{k'k}+a^{(nj)}_{k'l}a^{(ni)}_{k'k})\\
&\leq \max\limits_{1\leq v\leq n}Var(X^2_v)\big((a^{(ni)}_{kl})^2(a^{(nj)}_{kk'})^2+(a^{(nj)}_{kl})^2(a^{(ni)}_{kk'})^2
+(a^{(ni)}_{k'l})^2(a^{(nj)}_{k'k})^2+(a^{(nj)}_{k'l})^2(a^{(ni)}_{k'k})^2\big),
\end{align*}
which implies that
$$\sum\limits_{l=1}^n \sum\limits_{k\neq k';k,k'\neq l}E[ \mathfrak{D}_l Z^{\ast (k)}_{ij} \mathfrak{D}_l Z^{\ast (k')}_{ij}]\leq 4\max\limits_{1\leq v\leq n}Var(X^2_v)\sum\limits_{k=1}^n\big(\sum\limits_{v=1}^n(a^{(ni)}_{kv})^2\big)\big(\sum\limits_{v=1}^n(a^{(nj)}_{kv})^2\big).$$
This, together with (\ref{mmesk}), yields
\begin{equation}\label{do02a1}
Var(Z^\ast_{ij})\leq 8\max\limits_{1\leq v\leq n}Var(X^2_v)\max\limits_{1\leq v\leq n}E|X_v|^4\sum\limits_{k=1}^n\big(\sum\limits_{v=1}^n(a^{(ni)}_{kv})^2\big)\big(\sum\limits_{v=1}^n(a^{(nj)}_{kv})^2\big).
\end{equation}
On the other hand, it is easy to estimate $Var(Z^\star_{ij}).$ Indeed, we have
$$Z^\star_{ij}=\sum\limits_{k=1}^n\sum\limits_{u,v=1}^n a^{(i)}_{ku} a^{(j)}_{kv}X_uX_v=\sum\limits_{u,v=1}^n\big(\sum\limits_{k=1}^na^{(i)}_{ku} a^{(j)}_{kv}\big)X_uX_v.$$
Thus $Z^\star_{ij}$ is a quadratic form with nonvanishing diagonal and hence,
\begin{equation}\label{do02a2}
Var(Z^\star_{ij})\leq \max\{2,\max\limits_{1\leq u\leq n}Var(X^2_u)\}\sum\limits_{u,v=1}^n \big(\sum\limits_{k=1}^na^{(ni)}_{ku} a^{(nj)}_{kv}\big)^2.
\end{equation}
It only remains to estimate $E|\mathfrak{D}_kF^{(n)}_i|^3.$ We use Theorem 2.1 in \cite{Rio2009} to get
\begin{align}
E|\mathfrak{D}_kF^{(n)}_i|^3=E|X_k|^3E|\sum\limits_{v=1}^n a^{(ni)}_{kv}X_v|^3&\leq 2^{3/2}\max\limits_{1\leq v\leq n}(E|X_v|^3)^2\big(\sum\limits_{u=1}^n(a^{(ni)}_{ku})^2\big)^{3/2}\notag\\
&\leq 2^{3/2}\max\limits_{1\leq v\leq n}E|X_v|^4\big(\sum\limits_{u=1}^n(a^{(ni)}_{ku})^2\big)^{3/2}.\label{do02a3}
\end{align}
Recalling $Var(Z^{(\frac{1}{2})}_{ij})\leq \frac{1}{2}\left(Var(Z^\ast_{ij})+Var(Z^\star_{ij})\right),$ we obtain (\ref{jjdl5}) by inserting (\ref{do02a1}), (\ref{do02a2}) and (\ref{do02a3}) into (\ref{jjdl55}).

To prove the convergence of $F^{(n)}$ to $Y$ in distribution, we need to show that
$$|E[g(F^{(n)})]-E[g(Y)]|\to 0\,\,\,\text{as}\,\,\,n\to \infty.$$
The conditions (\ref{condi01}) and (\ref{condi02}) imply that the first two terms in the right hand side of (\ref{jjdl5}) converge to zero, respectively. Moreover, we have
\begin{align*}
\sum\limits_{k=1}^n\big(\sum\limits_{v=1}^n(a^{(ni)}_{kv})^2\big)\big(\sum\limits_{v=1}^n&(a^{(nj)}_{kv})^2\big)
\leq\frac{1}{2}\sum\limits_{k=1}^n\big(\sum\limits_{v=1}^n(a^{(ni)}_{kv})^2\big)^2+
\frac{1}{2}\sum\limits_{k=1}^n\big(\sum\limits_{v=1}^n(a^{(ni)}_{kv})^2\big)^2\\
&\leq E|F^{(n)}_i|^2\left[\max\limits_{1\leq u\leq n}\sum\limits_{v=1}^n(a^{(ni)}_{uv})^2\right]+E|F^{(n)}_j|^2\left[\max\limits_{1\leq u\leq n}\sum\limits_{v=1}^n(a^{(nj)}_{uv})^2\right],
\end{align*}
$$\sum\limits_{u=1}^n\big(\sum\limits_{k=1}^n(a^{(i)}_{ku})^2\big)^{3/2}\leq E|F^{(n)}_i|^2\left[\max\limits_{1\leq u\leq n}\sum\limits_{v=1}^n(a^{(ni)}_{uv})^2\right]^{1/2}$$
and $E|F^{(n)}_i|^2\leq C_{ii}+1, E|F^{(n)}_j|^2\leq C_{jj}+1$ for $n$ large sufficiently. Hence, the condition (\ref{condi03}) ensures that the last two terms in the right hand side of (\ref{jjdl5}) also converge to zero as $n\to\infty.$ This completes the proof.
\end{proof}

\noindent {\bf Acknowledgments.}  The author would like to thank the anonymous referees for their valuable comments for improving the paper.

\end{document}